\newtheorem{thm}{Theorem}[section]
\newtheorem{rem}{Remark}[section]
\newtheorem{prop}{Proposition}[section]
\newcommand{\R}{\mathbb{R}}
\newcommand{\eps}{\varepsilon}
\newcommand{\p}{\partial}
\title{Opinion formation process in a hierarchical society}
\begin{document}

\author[M.C. Romero Longar, N.Saintier and A. Silva]
{Maria Celeste Romero Longar, Nicolas Saintier and Analia Silva}

\begin{abstract}
In this work we study the formation of consensus in a hierarchical population. We derive the
corresponding kinetic equations, and analyze the long time behaviour of their solutions  for the case of finite number of hierarchical 
obtaining explicit formula for the consensus opinion. 
\end{abstract}

\maketitle

\section{Introduction}

In the last   years an increasing amount of work has been devoted to the mathematical study of models coming from social and economic sciences. 
Among these, an interesting subject is the modelling of opinion formation process where one tries to understand how exchanges of opinions between individuals can result in such dramatic effect as the emergence of consensus, bipolarization, extremism, ... 

Understanding how individuals are affected by others and change their opinion as a result is a well studied question in sociology. Among the social theories that study theses issues are the social impact and social pressure theory \cite{Asch,cial,Lata},  where
agents modify their opinions trying to fit in some social group, and the persuasive
argument theory \cite{vinokur,MasFlache}, where the new opinion appears after an interchange of
arguments among agents. Economists like T. Schelling \cite{Schelling} also studied how microscopic changes in the behaviours of individuals can propagate in a society to become observable at a large scale.  

More recently physicists started taking advantages of the similarities between these questions  and those of statistical mechanics. 
Indeed we can develop strong analogies between statistical mechanics on one hand and sociology, economy on the other hand 
assimilating individuals exchanging opinion (or any other socio-economic quantity) 
during an encounter with particles exchanging energy during shocks. 
Despite being a reductive approach of the complexity of human behaviours, simple models developped by physicists were able to reproduce empirical data. 
These successes led to the creation of two very active field of research, namely sociophysic and econophysic (see e.g. \cite{CFL, CCCC, Slanina}). 
Mathematicians are also actively investigated questions related to the modelling of socio-economic human behaviours mainly using 
active particles and tools from the kinetic theory of gases like Boltzmann-like and Fokker-Planck equations (see \cite{Bell, Bell2, PT}).  

The relative simplicity of these models is useful to assess the impact of some particular aspect of the dynamic like e.g. 
 the sociological assumptions  embodied in the interaction rule modelling the exchange of opinions between individuals, 
the heterogeneity of the individuals or of the opinion, the network of relations between individuals, .... 
The dynamic of the models is usually investigated though intensive agent-based numerical simulations. 
Theoretical results can then be obtained in some cases using tools from statistical mechanics, probability and partial differential equations
(see e.g. \cite{PT, ppss, pps, pps2}). 

Interactions among individuals are usually assumed to be symmetric in the sense that individual  A influences individual B in the same way that B influences A 
(up to particular characteristics of A and B). 
On the other hand many social structures in human society are hierarchized in the sense that members are ranked differently 
(family, work, army, government), and the rank affects the flow of information in the group resulting in asymmetric interactions between individuals. 
There are very few works concerning opinion dynamic in presence of asymmetry between individuals (see \cite{Laguna} and references therein).

To assess the impact of asymmetry among individuals, we consider in this work a hierarchized society where the ranking of individuals directly impact the flow of information  between them. 
We will suppose that an individual convinces automatically any lower ranked individuals and can also convinces a higher ranked individual with a certain probability $p$. 
Thus when $p=0$ information flows only from top-ranked individuals down to the lowest ranked ones. 
On the contrary when $p>0$ this perfect top-to-bottom flow is perturbed by an upward flow of information. 
We are mainly interested in analyzing how the formation of consensus is perturbed by such a noise. 

To do so we consider a large population of individuals each characterized by an opinion and a rank. 
Individuals interact by pair and modify their opinion following a standard tendency to compromise rule but also taking into account their respective ranks. 
Numerical simulations suggest consensus is always reached  in the sense that individuals tend to share the same opinion up to small random fluctuations. 
To gain further insights we formulate an equation satisfied by the distribution of pairs (opinion, rank), 
and analyze its long-time behaviour when there are finitely many different ranks in the population. 
We also consider another source of perturbation  allowing for stubborn individuals i.e. individuals who always keep the same opinion. 
Such individuals have a strong impact on the formation of consensus since they accelerate the formation of consensus and 
drive the opinion of the non-stubborn individuals towards a the mean of their opinion 
(see \cite{ppss} and the references therein). 
As a result of our theoretical analysis we prove that consensus is indeed reached and obtain explicit formula for the limit opinion
taking into account individuals'ranks and stubborness. 
These results are in perfect agreement with the agent-based simulations.

\subsection*{Notations}

We fix some notations and recall some known facts that will be used throughout this paper.

We let $K=[-1,1]\times [0,1]$ and denote a generic element of $K$ by $\varpi:=(w,h)$.

The convex set of Borel probability measures over $K$ is denoted $P(K)$.
The integration of a measurable function $\phi:K\to \R$ with respect to a probability measure $f\in P(K)$ is denoted by $\int_K \phi(\varpi)\,f(d\varpi)$. When $f=f(t)$ depends on time, we denote the integral $\int_K \phi(\varpi)\,f(t,d\varpi)$.
The marginals of $f\in P(K)$ in the $w$- and $h$-variables will be denoted by $f(dw)$ and $f(dh)$ respectively. Thus for any $\phi:K\to \R$ depending only on $w$,
$$ \int_K \phi(\varpi)\,f(d\varpi) = \int_{-1}^1 \phi(w)\,f(dw), $$
and likewise if $\phi$ depends only on $h$,
$$ \int_K \phi(\varpi)\,f(d\varpi) = \int_0^1 \phi(h)\,f(dh). $$

Two notions of convergence are useful on $P(K)$: the convergence with respect to the Total Variation (TV) norm, and the weak convergence.
The TV norm of $f\in P(K)$ is
\begin{equation}\label{TVnorm}
 \|f\|_{TV}= \sup \left\lbrace \int_K \phi(\varpi)\,f(d\varpi):\, \phi \in C(K),\, \|\phi\|_\infty \leq 1 \right\rbrace.
\end{equation}
Then $(P(K), \|.\|_{TV})$ is a Banach space. However the TV norm is too rigid for our purpose. 
For instance even if $y\to x$, we do not have
$\lim_{y\to x}\|\delta_y-\delta_x\|_{TV}=0$ - in fact $\|\delta_y-\delta_x\|_{TV}=2$ if $x\neq y$.
Moreover it is difficult to obtain compactness in the TV norm.
The weak convergence is a weaker notion of convergence which turns to be out more useful for us.
We say that a sequence $(f_n)_n\subset P(K)$ weakly converges to $f\in P(K)$  if
\begin{equation}\label{WeakConv}
\int_K \phi(\varpi)\,f_n(d\varpi)\to \int_K \phi(\varpi)\,f(d\varpi) \qquad \text{for any $\phi\in C(K)$.}
\end{equation}
Since $K$ is compact, Prokhorov's Theorem gives that $P(K)$ is compact for the weak convergence.
Moreover the weak convergence can be metricized in several ways. Among the many metrics giving the weak convergence, the Wasserstein or Monge-Kantorovich distance $W_1$ is especially useful for us.
The $W_1$ distance between $f,g\in P(K)$ is defined as
\begin{equation}\label{W1}
 W_1(f,g) = \inf_\pi \int_{K\times K} |\varpi-\varpi_*| \,\pi(d\varpi,d\varpi_*)
= \sup_\phi \, \int_K \phi(\varpi) \, (f -g)(d\varpi)
\end{equation}
where the $\inf$ is taken over all $\pi\in P(K\times K)$ with marginals $f$ and $g$, and 
the $\sup$ is taken over all the functions $\phi:K\to \R$ that are 1-Lipschitz.
The last equality is indeed the Kantorovich-Rubinstein Theorem. 
We refer to the book \cite{V} for more details concerning Monge-Kantorovich distances.

\section{Description of the model}

\subsection{Opinion, hierarchy and stubbornness parameters}

We consider a large hierarchical population of individuals and suppose that each one of them is characterized by three parameters: their opinion $w$, their hierarchy level $h$, and their stubbornness $q$. The opinion $w$ of an individual quantifies their attitude about some given topic currently discussed in the population. We model it by a real number in $[-1,1]$ where $w=\pm 1$ corresponds to a radical opinion and $w=0$ to a neutral one. 
The importance in the society of an individual is indicated by their hierarchy level $h$, a real number in $[0,1]$ where $h=1$ is the highest hierarchy level and $h=0$ the lowest.
Eventually the stubbornness parameter $q$ measures how stubborn is the individual when changing opinion.
We represent it with a real number $q$ in $[0,1]$, the probability the individual will accept being influenced by others in an interaction.
Notice in particular the individuals with $q=0$ are stubborn in the sense they will never change their opinion.
The impact of stubborn individuals on the formation of consensus and its value is important as shown in \cite{ppss}.
Indeed the analysis in \cite{ppss} shows that the stubborn individuals plays a critical role in the opinion formation process as they both accelerate the consensus formation in the non-stubborn population and determine the asymptotic consensus opinion.
In the non-stubborn population, the precise value of $q$ only affects the velocity at which individuals change opinion but not the qualitative overall dynamic. Since the main focus of this paper is on the impact of the hierarchy parameter $h$ on the opinion formation process, we will assume for simplicity that $q$ can only take the values $0$ and $1$. Thus the stubborn individuals have $q=0$, and the non-stubborn ones have $q=1$.

\subsection{Interaction rule}

The most crucial point in the modelling of an opinion formation process consists in specifying the interactions between individuals since they will result in changes of the opinion of the interacting individuals and will ultimately be the reason for any macroscopic property of the distribution of opinions.

Here we follow the majority of the papers on this topic assuming that interactions occur at a constant rate (assumed w.l.o.g to be 1) between pair of randomly chosen individuals.

Assume e.g. that agents $i$ and $j$ with parameters $(w,h,q)$ and $(w_*,h_*,q_*)$ are to interact.
We denote their new post-interaction parameters by $(w',h',q')$ and $(w'_*,h'_*,q_*')$ respectively.
We assume for simplicity that the hierarchy level and the stubbornness do not change:
$$h'=h, \quad h'_*=h_*, \qquad \text{and}\qquad
q'=q, \quad q'_*=q_*. $$
If agent $i$ is stubborn, i.e. $q=0$, then he will not change his mind:
$$ w'= w \qquad \text{if $q=0$.} $$
If he is not stubborn, i.e. $q=1$, agent i will change his opinion under the influence of $j$
if either $j$ have a higher or equal hierarchy than $i$,
 i.e. $h_*\ge h$, or, when $h_*< h$, with a given probability $p\in [0,1]$.
In both cases, $i$ is convinced by $j$ and as a result slightly moves his opinion $w$ toward $j$'s opinion $w_*$:
\begin{equation}\label{InterRules}
 w'=
\begin{cases}
w+\gamma (w_*-w) \qquad & \text{if $h_{*}\ge h$, or, if $h_{*} < h$, with probability $p$, } \\
w  & \text{otherwise.}
\end{cases}
\end{equation}
Here $\gamma>0$ is a given parameter modelling the strength of the interaction.

Interaction rule \eqref{InterRules} corresponds to an attractive interaction which models the tendency to compromise. 
This is a well-studied \cite{Abelson} mechanism considered by different sociological theories (persuasion \cite{Akers, Vinokur}, imitation \cite{Akers}, social pressure \cite{Asch, Sherif}). 

The parameter $p$ models the possibility of not respecting the hierarchy i.e. the probability that a lower ranked individual convinces a higher ranked one. Notice that the extreme case $p=0$ means that the hierarchy is always strictly respected. We refer to this case as the ``pure hierarchy model''. We expect in that case a vertical transmission of information from the highest hierarchy level down to the lowest, eventually
 altered at each level by the stubborn individuals. We will confirm this intuition later.
On the other hand, when $p$ is positive, this perfect vertical propagation of opinion is disturbed.
It is then interesting to see to the impact of the disturbance on the formation of consensus.
Eventually when $p=1$ then $j$ always convinces $i$ whatever their hierarchy levels: hierarchy is irrelevant. This is the case studied in \cite{ppss}.
We thus focus on the case $p=0$ where hierarchy is strictly respected, and the case $p\in (0,1)$ where hierarchy is disrupted with probability $p$.

\section{Macroscopic kinetic model}

\subsection{Macroscopic integral equation}

To describe at the whole population level the consequences of the microscopic interaction rule presented in the previous section,
we follow the methodology used e.g. in \cite{Bell, PT, GT}. 
We thus introduce the  distribution $f_\gamma(t,d\varpi)$ of the pair $\varpi$ in the whole population at time $t$.
Notice $f_\gamma(t,d\varpi)$ belongs to $P(K)$, the set of probability measures on $K:=[-1,1]\times [0,1]$.
Then
\begin{equation}\label{DefMeanOpinion}
m_\gamma(t) = \int_K w\,f_\gamma(t,d\varpi)
\end{equation}
is the expected mean opinion in the whole population, and
\begin{equation}\label{Defvarianza}
Var_\gamma(t)  = \int_K w^2\,f_\gamma(t,d\varpi) - m_\gamma(t)^2
\end{equation}
is the variance of $f(t,d\varpi)$.
In general for any continuous function $\phi\in C(K)$, the integral $\int_K\phi(\varpi)\,f_\gamma(t,d\varpi)$ is the expected  mean value of $\phi$ at time $t$.
The time evolution of $f_\gamma(t,d\varpi)$ is then characterized through the time evolution of $\int_K\phi(\varpi)\,f_\gamma(t,d\varpi)$ for any $\phi\in C(K)$.
Following \cite{GT} $f_\gamma(t,d\varpi)$ satisfies an integral equation given in weak form by
\begin{equation}\label{BoltzmannEqu}
\begin{split}
 & \dfrac{d}{dt} \int_K \phi(\varpi)\,f_\gamma(t,d\varpi)
  = \int_{K\times K} \mathbb{E}[\phi(\varpi^{\prime})-\phi(\varpi)]\, f_\gamma(t,d\varpi)f_\gamma(t,d\varpi_*)
\end{split}
\end{equation}
for any $\phi\in C(K)$, where $\mathbb{E}[\phi(\varpi^{\prime})-\phi(\varpi)]$ is the expected value of $\phi(\varpi^{\prime})-\phi(\varpi)$.

A classical argument based on Banach fixed-point theorem shows that this equation is well-posed when we endow $P(K)$
with the total variation norm \eqref{TVnorm}.

\begin{thm}\label{existe}
For any initial condition $f_0\in P(K)$ there exists a unique $f_\gamma \in C([0, +\infty ), P(K))\cap C^1((0, +\infty ), P(K))$ satisfying (\ref{BoltzmannEqu}) with initial condition $f_{\gamma | t=0}=f_0$.
\end{thm}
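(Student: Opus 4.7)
The plan is to apply Banach's fixed point theorem in the complete metric space $X_T := C([0,T], (P(K), \|\cdot\|_{TV}))$ for $T$ small enough, and then extend the solution globally by iteration in time. To set up the contraction, I would first use $\int_K f_\gamma(t, d\varpi_*) = 1$ to rewrite \eqref{BoltzmannEqu} as
\begin{equation*}
\frac{d}{dt}\langle \phi, f_\gamma(t)\rangle + \langle \phi, f_\gamma(t)\rangle = Q^+(f_\gamma(t))(\phi),
\end{equation*}
where $\langle \phi, f\rangle := \int_K\phi\, df$ and $Q^+(f)(\phi):= \int_{K\times K}\mathbb{E}[\phi(\varpi')]\,f(d\varpi)f(d\varpi_*)$. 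Variation of constants then turns this into the Duhamel integral equation
\begin{equation*}
\langle \phi, f_\gamma(t)\rangle = e^{-t}\langle \phi, f_0\rangle + \int_0^t e^{-(t-s)} Q^+(f_\gamma(s))(\phi)\, ds,
\end{equation*}
valid for every $\phi\in C(K)$. By Riesz representation the right-hand side defines, for each $t\in[0,T]$, a bounded linear functional on $C(K)$, hence a signed Borel measure $\mathcal{T} f(t)$. This defines the operator $\mathcal{T}$ on $X_T$ that I would use for the fixed point.

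Before estimating distances, I would verify that $\mathcal{T}$ maps $X_T$ into itself. Mass conservation follows by choosing $\phi\equiv 1$, for which $\mathbb{E}[\phi(\varpi')-\phi(\varpi)]=0$, so that $\langle 1, \mathcal{T} f(t)\rangle = 1$. Positivity is immediate from the Duhamel formula: when $\phi\geq 0$ one has $\mathbb{E}[\phi(\varpi')]\geq 0$, and the two terms on the right are nonnegative since $f_0$ and the iterates are nonnegative. Thus $\mathcal{T} f(t)\in P(K)$ for every $t$.

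The core step is a Lipschitz bound for $Q^+$ in TV norm. Bilinearity gives
\begin{equation*}
Q^+(f)(\phi) - Q^+(g)(\phi) = \int_{K\times K}\mathbb{E}[\phi(\varpi')]\,\bigl[(f-g)(d\varpi)f(d\varpi_*) + g(d\varpi)(f-g)(d\varpi_*)\bigr],
\end{equation*}
and Fubini together with the trivial bound $|\mathbb{E}[\phi(\varpi')]|\le \|\phi\|_\infty$ allows one to estimate each term by $\|\phi\|_\infty \|f-g\|_{TV}$ via \eqref{TVnorm}, yielding $\|Q^+(f)-Q^+(g)\|_{TV}\le 2\|f-g\|_{TV}$. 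Plugging this into the Duhamel formula and taking the supremum over $t\in[0,T]$ gives
\begin{equation*}
\sup_{t\in[0,T]}\|\mathcal{T} f(t)-\mathcal{T} g(t)\|_{TV}\le 2T\sup_{t\in[0,T]}\|f(t)-g(t)\|_{TV},
\end{equation*}
so $\mathcal{T}$ is a strict contraction as soon as $T<1/2$. Banach's theorem then provides a unique fixed point on $[0,T]$, and since the admissible time step does not depend on the initial datum, iterating the argument starting from $f_\gamma(T)$ yields a unique solution on $[0,+\infty)$.

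Finally, $C^1$ regularity follows from the Duhamel formulation itself: the map $t\mapsto -f_\gamma(t) + Q^+(f_\gamma(t))$ is continuous in TV norm because $f_\gamma$ is TV-continuous and $Q^+$ is Lipschitz, so differentiating the integral equation restores \eqref{BoltzmannEqu} pointwise in $t$ for every $\phi\in C(K)$, giving $f_\gamma \in C^1((0,+\infty), P(K))$. The main obstacle is really just the Lipschitz estimate for $Q^+$, but this reduces to a two-line computation once bilinearity is exploited; the only genuinely model-specific input is that $|\mathbb{E}[\phi(\varpi')]|\le \|\phi\|_\infty$, which holds simply because $\varpi'\in K$ for any realization of the interaction rule \eqref{InterRules}.
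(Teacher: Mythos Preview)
Your proposal is correct and is precisely the standard argument the paper has in mind: the paper does not give a proof but simply writes ``The proof of this result is standard, see \cite{CIP} for the details,'' and the Duhamel/Banach fixed-point scheme in total variation that you carry out is exactly that standard argument. Your treatment is in fact more detailed than anything in the paper itself.
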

\noindent The proof of this result is standard, see \cite{CIP} for the details. 

\medskip

The above equation \eqref{BoltzmannEqu} does not distinguish between stubborn and non-stubborn individuals although only non-stubborn change their opinion in time.
To obtain an equation for the evolution of the distribution of $\varpi$ in the non-stubborn population,
denote $\alpha\in [0,1]$ the proportion of stubborn agents, and $f_\gamma^S(t,d\varpi)$, $f_\gamma^{NS}(t,d\varpi)$ the distribution of opinion in the stubborn and non-stubborn population. 
Notice $\alpha$ is constant in time since the dynamic does not affect the stubbornness. 
As the stubborn individuals do not change opinion, the distribution of opinion in the stubborn population is indeed constant in time so that $f_\gamma^S(t,d\varpi)=f_0^S(d\varpi)$.
We then have
$$ f_\gamma(t,d\varpi)=\alpha f_0^S(d\varpi) + (1-\alpha)f_\gamma^{NS}(t,d\varpi). $$
Equation \eqref{BoltzmannEqu} can then be written as
\begin{eqnarray*}
 && \alpha\dfrac{d}{dt} \int_{K} \phi(\varpi)\,f^{S}_0(d\varpi) + (1-\alpha)\dfrac{d}{dt} \int_{K} \phi(\varpi)\,f_\gamma^{NS}(t,d\varpi) \\
& & = \alpha \int_{K^2} \mathbb{E}[\phi(\varpi')-\phi(\varpi)] \, f^S_0(d\varpi)f_\gamma(t,d\varpi_{*}) \\
&& \quad + (1-\alpha) \int_{K^2} \mathbb{E}[\phi(\varpi')-\phi(\varpi)] \, f_\gamma^{NS}(t,d\varpi)f_\gamma(t,d\varpi_{*}).
\end{eqnarray*}
The first derivative in the left hand side is clearly zero. 
Moreover since $w'=w$ when the agent is stubborn, the first integral in the right hand side is zero.
Thus
\begin{equation*}
\begin{split}
 & \dfrac{d}{dt} \int_{K} \phi(\varpi)\,f_\gamma^{NS}(t,d\varpi)
 =  \int_{K^2} \mathbb{E}[\phi(\varpi')-\phi(\varpi)] \, f_\gamma^{NS}(t,d\varpi)f_\gamma(t,d\varpi_{*}) \\
 &  =  \int_{K^2} [\phi(w+\gamma(w_*-w),h)-\phi(w,h)](p1_{h_*< h} + 1_{h_*\ge h}) \, f_\gamma^{NS}(t,d\varpi)f_\gamma(t,d\varpi_*)
\end{split}
\end{equation*}
i.e.
\begin{equation}\label{BoltzmannEqu2}
\begin{split}
 & \dfrac{d}{dt} \int_{K} \phi(\varpi)\,f_\gamma^{NS}(t,d\varpi) \\
 &  =  \int_{K^2} [\phi(w+\gamma(w_*-w),h)-\phi(w,h)](p + (1-p)1_{h_*\ge h}) \, f_\gamma^{NS}(t,d\varpi) f_\gamma(t,d\varpi_*).
\end{split}
\end{equation}

Studying the asymptotic behaviour of the solution $f_\gamma^{NS}(t,d\varpi)$ as $t\to +\infty$ of this integral equation is non trivial.
Notice for instance that taking $\phi(\varpi)=w$ does not yield a closed equation for the mean opinion in the non-stubborn population 
(see e.g. the system of equations \eqref{SystemMeanOpDiscrete} solved by the mean opinion in each non-stubborn subgroup of a given hierarchy). 
Instead we will rely on a procedure called grazing limit to deduce from \eqref{BoltzmannEqu2} a local equation amenable to analysis.

\subsection{Grazing limit: Heuristic}

The grazing  limit procedure is  well-known in the mathematical literature on the Boltzmann equation and has been adapted to opinion formation model in \cite{GT}. 
In our setting it consists simply in taking $\gamma\ll 1$ and approximating
\[ \phi(w+\gamma(w_*-w),h)-\phi(w,h) \simeq  \gamma  \phi_w(\varpi)(w_*-w). \]
Then \eqref{BoltzmannEqu2} becomes
\begin{align}
 &\frac{1}{\gamma} \dfrac{d}{dt} \int_{K} \phi(\varpi) \,f_\gamma^{NS}(t,d\varpi) \notag \\
& \approx p \int_{K^2} \phi_w(\varpi)(w_*-w)\,f_\gamma^{NS}(t,d\varpi)f_\gamma(t,d\varpi_{*} ) \notag \\
& \hspace{0.5cm} + (1-p)\int_{K^{2}}
\phi_w(\varpi)(w_*-w)1_{h_*\ge h}\,f_\gamma^{NS}(t,d\varpi)f_\gamma(t,d\varpi_*). \label{Equ100}
\end{align}
The first integral in the right hand side is
$$  \int_{K^2} \phi_w(\varpi)(m_\gamma(t)-w)\,f_\gamma^{NS}(t,d\varpi)   $$
where $m_\gamma(t)$ is the mean opinion defined in \eqref{DefMeanOpinion}.
Notice that
$$ \int_{K}1_{h_*\ge h}\,f_\gamma(t,d\varpi_{*}) = f_0([h,1]) $$
is the proportion of agents with a hierarchy greater than or equal to $h$
(which remains constant time since the dynamic we consider here do not affect the hierarchy level).
Denote
\begin{eqnarray}\label{DefOp}
  b_\gamma(t,h) := \int_{K} w_* 1_{h_*\ge h} \frac{f_\gamma(t,d\varpi_*)}{f_{0}([h,1])}
\end{eqnarray}
the average opinion among agents with a hierarchy greater than or equal to $h$
(notice that $\frac{f_\gamma(t,d\varpi_*) }{f_{0}([h,1])}$ is a probability measure on $[-1,1]\times [h,1]$).
The second integral in the right hand side of \eqref{Equ100} is then
$$ \int_{K} \phi_w(\varpi) (b_\gamma(t,h)-w) f_{0}([h,1]) \,f_\gamma^{NS}(t,d\varpi).$$
We thus obtain from \eqref{Equ100} the  equation
\begin{equation}\label{LimitEqu}
\begin{split}
& \frac{1}{\gamma} \dfrac{d}{dt}\int_{K}\phi(\varpi)\,f_\gamma^{NS}(t,d\varpi) \\
& =  p\int_K \phi_w(\varpi) \left( m_\gamma(t) - w\right) \,f_\gamma^{NS}(t,d\varpi) \\
& \quad +  (1-p)\int_K \phi_w(\varpi)\left( b_\gamma(t,h)- w \right) f_{0}([h,1])\,f_\gamma^{NS}(t,d\varpi).
\end{split}
\end{equation}
This equation is the weak formulation of the transport equation
\begin{equation}\label{LimitEq}
 \frac{1}{\gamma} \frac{\p}{\p t} f_\gamma^{NS} +
\frac{\p}{\p w} \Big( v[f_\gamma^{NS}(t)](w) f_\gamma^{NS} \Big) = 0, 
\end{equation}
where 
$$ v[f_\gamma^{NS}(t)](w) = p(m_\gamma(t) - w)+(1-p)(b_\gamma(t,h)- w)f_{0}([h,1]). $$   
The grazing limit procedure thus allowed to replace the integral equation \eqref{BoltzmannEqu2} by the local equation \eqref{LimitEq} 
which is  expected to  approximate  \eqref{BoltzmannEqu} in the limit $\gamma\simeq 0$.

\subsection{Existence of the grazing limit}

These  heuristic considerations can be justified up to rescaling time considering the new time-scale $\tau=\gamma t$.
Indeed since $\gamma\simeq 0$, changes in opinion on the time scale $t$ are infinitesimal.
We then have to wait a long time $\tau$ to see a macroscopic change.

The following results aim at justifying the above  informal derivation.  
From now on we endow $P(K)$ with the weak convergence metricized e.g. by the $W_1$ distance \eqref{W1}.

As a first step we can prove that $f_\gamma$ converges as $\gamma\to 0$ along a subsequence on the time scale $\tau$. 

\begin{thm}\label{GrazingConvergence}
Given an initial condition $f_{0} \in P(K)$, consider the solution $f_\gamma$ of
equation (\ref{BoltzmannEqu}) as given by Theorem \ref{existe}.
With a slight abuse of notation we denote  $f_{\gamma}(\tau,d\varpi) := f_\gamma(t,d\varpi)$ where $\tau = \gamma t$.
Then there exists $f^{NS}\in C([0, +\infty), P(K))$ such that, up to subsequences, $f^{NS}_{\gamma}\rightarrow f^{NS}$ as $\gamma\rightarrow 0$
in $C([0, T], P(K))$ for any $T > 0$.
\end{thm}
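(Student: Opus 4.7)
The plan is a standard Arzelà--Ascoli compactness argument in the Polish metric space $(P(K),W_1)$. Recall that, by Prokhorov's theorem applied to the compact set $K$, the whole space $P(K)$ is compact for the weak convergence, and the $W_1$ distance metrizes this convergence. So pointwise (in $\tau$) precompactness is free, and the only work is to prove equicontinuity in $\tau$ of the family $\{f_\gamma^{NS}(\tau,\cdot)\}_{\gamma\in(0,1]}$ with values in $(P(K),W_1)$.

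To establish the equicontinuity, I start from equation \eqref{BoltzmannEqu2} written in the rescaled time $\tau=\gamma t$, which reads
\[
\frac{d}{d\tau}\int_K \phi(\varpi)\,f_\gamma^{NS}(\tau,d\varpi)
= \frac{1}{\gamma}\int_{K^2}\bigl[\phi(w+\gamma(w_*-w),h)-\phi(w,h)\bigr]\bigl(p+(1-p)\mathbf{1}_{h_*\ge h}\bigr)\,f_\gamma^{NS}(\tau,d\varpi)f_\gamma(\tau,d\varpi_*).
\]
For any 1-Lipschitz test function $\phi:K\to\R$ we have $|\phi(w+\gamma(w_*-w),h)-\phi(w,h)|\le \gamma|w_*-w|\le 2\gamma$, while the hierarchy factor $p+(1-p)\mathbf{1}_{h_*\ge h}$ is bounded by $1$. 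Hence the right-hand side is bounded uniformly in $\gamma$, $\tau$ and $\phi$ by $2$, and integration in $\tau$ yields
\[
\left|\int_K\phi\,f_\gamma^{NS}(\tau_1,d\varpi)-\int_K\phi\,f_\gamma^{NS}(\tau_2,d\varpi)\right|\le 2|\tau_1-\tau_2|
\]
for every $\gamma\in(0,1]$ and every 1-Lipschitz $\phi$. Taking the supremum over such $\phi$ and invoking the Kantorovich--Rubinstein duality \eqref{W1} gives the equi-Lipschitz estimate
\[
W_1\bigl(f_\gamma^{NS}(\tau_1,\cdot),f_\gamma^{NS}(\tau_2,\cdot)\bigr)\le 2|\tau_1-\tau_2|.
\]

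With equi-Lipschitzness in $\tau$ and pointwise precompactness in $(P(K),W_1)$, the classical Arzelà--Ascoli theorem for continuous maps with values in a compact metric space produces, on each interval $[0,T]$, a subsequence $\gamma_k\to 0$ such that $f_{\gamma_k}^{NS}\to f^{NS}$ uniformly in $\tau\in[0,T]$ for some $f^{NS}\in C([0,T],P(K))$. A diagonal extraction over $T=1,2,3,\dots$ then yields a single subsequence that works on every compact subinterval of $[0,+\infty)$, producing a limit $f^{NS}\in C([0,+\infty),P(K))$. The limit is automatically 2-Lipschitz in $\tau$ with respect to $W_1$, which is a useful byproduct.

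The only genuinely delicate point, beyond routine bookkeeping, is the choice of the right topology on $P(K)$: in the TV norm one cannot hope for such equicontinuity (as emphasized in the introduction, even translates of Dirac masses stay at TV-distance $2$), so it is essential to work with $W_1$ so that the grazing factor $\gamma$ appearing in the increment $\phi(w+\gamma(w_*-w),h)-\phi(w,h)$ can be absorbed against the Lipschitz constant of the test function. Once this is set up, the rest is just Arzelà--Ascoli plus a diagonal argument.
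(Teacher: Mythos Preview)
Your argument is correct, and the overall architecture (equicontinuity plus pointwise compactness, then Arzel\`a--Ascoli and a diagonal extraction) is the same as the paper's. The one genuine difference is in how you establish equicontinuity. The paper takes $\phi\in C^2(K)$, performs a second-order Taylor expansion $\phi(w+\gamma(w_*-w),h)-\phi(w,h)=\gamma\phi_w(\varpi)(w_*-w)+\tfrac{\gamma^2}{2}\phi_{ww}(\xi,h)(w_*-w)^2$, bounds the resulting terms, and obtains equicontinuity in the $C^2(K)$-dual norm, then invokes a result of Gabetta--Toscani--Wennberg to the effect that this norm also metrizes the weak topology on $P(K)$. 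You bypass the Taylor expansion entirely: for a $1$-Lipschitz $\phi$ the increment is already $\le 2\gamma$, and Kantorovich--Rubinstein gives equicontinuity directly in $W_1$.

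Your route is more elementary and self-contained (no external lemma needed to identify the metric with weak convergence), and it yields the clean byproduct that the limit is $2$-Lipschitz in $W_1$. The paper's route, on the other hand, is not wasted effort: the Taylor expansion and the resulting decomposition \eqref{ArzelaAscoli} are reused immediately afterwards to pass to the limit in \eqref{CasiLimitEq} and identify the equation satisfied by $f^{NS}$. So the paper front-loads the expansion it will need anyway, whereas your proof of the compactness step is cleaner but postpones that work.
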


\begin{rem}\label{Remark}
Since $f_\gamma(\tau,d\varpi) = \alpha f_0^S(d\varpi) + (1-\alpha) f_\gamma^{NS}(\tau,d\varpi)$ and $f_0^S$ is constant in time, we obtain that
$f_\gamma\to f:=\alpha f_0^S + (1-\alpha) f^{NS}$.
\end{rem}

\begin{proof}
We can see from \eqref{BoltzmannEqu2} that the time-rescaled measure $f_{\gamma}(\tau)$ solves
\begin{equation*}
\begin{split}
 & \gamma\dfrac{d}{d\tau} \int_{K} \phi(\varpi)\,f^{NS}_\gamma(\tau,d\varpi) \\
 &  =  \int_{K^2} [\phi(w+\gamma(w_*-w),h)-\phi(w,h)](p + (1-p)1_{h_*\ge h}) \, f^{NS}_\gamma(\tau,d\varpi)f_\gamma(\tau,d\varpi_*).
\end{split}
\end{equation*}
We use a Taylor expansion of $\phi$ with respect to the $w$ variable:
\begin{align*}
 \phi(w+\gamma(w_*-w),h)-\phi(w,h)
& = \gamma\phi_{w}(\varpi)(w_*-w) + \dfrac{\gamma^2}{2}\phi_{ww}(\xi,h)(w_* -w)^{2}
\end{align*}
where $\xi$ lies between $w$ and $w+\gamma(w_*-w)$.
Then the same considerations that led to \eqref{LimitEqu} gives
\begin{eqnarray*}
&& \dfrac{d}{d\tau}\int_{K}\phi(\varpi)f_{\gamma}^{NS}(\tau ,d\varpi)
 =  p\int_{K}\phi_{w} \left( m_\gamma(\tau) - w\right) f_{\gamma}^{NS}(\tau ,d\varpi) + R(\tau ,\gamma) \\
&& +  (1-p)\int_{K}\phi_{w}\left( b_\gamma(\tau,h)- w \right) f_{0}\left([h,1]\right)f^{NS}_{\gamma}(\tau ,d\varpi),
\end{eqnarray*}
where
\[R(\tau ,\gamma)= \dfrac{\gamma}{2}\int_{K^{2}} (w_*-w)^2 \phi_{ww}(\xi,h)
(p + (1-p)1_{h_*\ge h}) \, f^{NS}_\gamma(\tau,d\varpi)f_\gamma(\tau,d\varpi_*).  \]
Integrating in time between $\tau$ and $\tau'$ we obtain
\begin{equation}\label{ArzelaAscoli}
\begin{split}
& \int_{K}\phi(\varpi) (f^{NS}_{\gamma}(\tau' ,d\varpi)-f^{NS}_{\gamma}(\tau ,d\varpi))
 =  p \int_\tau^{\tau'}\int_K \phi_w(\varpi) \left( m_\gamma(s) - w\right) f_{\gamma}^{NS}(s,\varpi)ds \\
& +  (1-p)\int_\tau^{\tau'} \int_K\phi_{w}(\varpi)\left( b_\gamma(s,h)- w \right) f_0\left([h,1]\right)f^{NS}_{\gamma}(s,d\varpi)ds\\
 &+ \int_\tau^{\tau'} R(s,\gamma)\,ds.\\
\end{split}
\end{equation}
Using that $|p + (1-p)1_{h_*\ge h}|\le 1$ and $(w_*-w)^2\le 4$ we have $|R(s,\gamma)|\le 2\gamma \|\phi_{ww}\|_\infty$.
Moreover $m_\gamma(s),b_\gamma(s,h)\in [-1,1]$ so that $|m_\gamma(s) -w|\le 2$ and  $|b_\gamma(s,h)-w|\le 2$.
Thus
\begin{eqnarray*}
\Big| \int_{K}\phi(\varpi) (f_{\gamma}^{NS}(\tau' ,d\varpi)-f_{\gamma}^{NS}(\tau ,d\varpi))\Big| \le C\|\phi\|_{C^2} |\tau'-\tau|
\end{eqnarray*}
where $C= 4+2\gamma$ and $\|\phi\|_{C^2}=\|\phi\|_\infty +  \|\phi_w\|_\infty + \|\phi_{ww}\|_\infty$.
We thus obtain
\begin{equation}\label{equ11}
\sup_{\phi \in C^3(K), \, \|\phi\|_{C^2}\leq 1}
\left|\int_{K}\phi(\varpi)\left( f_{\gamma}^{NS}(\tau^{\prime} ,d\varpi)-f_{\gamma}^{NS}(\tau ,d\varpi)\right) \right|\leq  C (\tau^{\prime}-\tau).
\end{equation}
The duality with $C^2(K)$ defines the following norm $\|.\|$ on $P(K)$:
\begin{equation*}
\|f\| := \sup_{\phi \in C^2(K), \, \|\phi\|_{C^2}\leq 1} \int_K\phi(\varpi ) \,d\mu.
\end{equation*}
According to Lemma 5.3 and Corollary 5.5 in \cite{GTW} this norm induces the weak topology on $P(K)$.

Notice that \eqref{equ11} implies that the family $\{f_\gamma^{NS}\}_\gamma$ is uniformly equicontinuous in $C([0,+\infty),(P(K),\|.\|))$.
Since $P(K)$ is compact for the weak convergence, we can then apply Arzela-Ascoli Theorem in $C([0,T],P(K))$, $T>0$, to obtain the existence of $f^{NS}\in C([0,+\infty),P(K))$ such that
up to a subsequence as $\gamma\to 0$, $f^{NS}_\gamma\to f^{NS}$ in $C([0,T],P(K))$, $T>0$.
\end{proof}

The next natural step  consists in  passing to the limit $\gamma\to 0$ in
\begin{equation}\label{CasiLimitEq}
\begin{split}
& \int_K\phi(\varpi)\, f^{NS}_{\gamma}(\tau,d\varpi) = \int_K\phi(\varpi) \, f^{NS}_0(d\varpi)\\
& \quad +  p \int_0^\tau\int_K \phi_w(\varpi) \left( m_\gamma(s) - w\right) f_{\gamma}^{NS}(s,\varpi)ds \\
& \quad +  (1-p)\int_0^\tau \int_K\phi_w(\varpi)\left( b_\gamma(s,h)- w \right) f_0\left([h,1]\right)f^{NS}_{\gamma}(s,d\varpi)ds\\
& \quad  + \int_0^\tau R(s,\gamma)\,ds   \\
\end{split}
\end{equation}
for a given $\phi\in C^2(K)$ to deduce that $f^{NS}$ solves the limit equation \eqref{LimitEqu}.
Clearly $\int_0^\tau R(s,\gamma)\,ds \to 0$ since $|R(s,\gamma)|\le 2\gamma \|\phi_{ww}\|_\infty$.
Moreover the convergence $f_\gamma^{NS}\to f^{NS}$ we just proved allows to pass to the limit in the left hand side. 
Also 
\begin{eqnarray*}
 m_\gamma(s)  =  \int_{-1}^1 w\,f_\gamma(s,d\varpi)
= \alpha \int_{-1}^1 w\,f^S_0(d\varpi) + (1-\alpha) \int_{-1}^1 w\,f^{NS}_\gamma(s,d\varpi)
\end{eqnarray*}
converges uniformly for $s\in [0,T]$, $T>0$, to
 $$ \alpha \int_{-1}^1 w\,f^S_0(d\varpi) + (1-\alpha) \int_{-1}^1 w\,f^{NS}(s,d\varpi) =: m(s). $$
We can thus pass to the limit in the second term in the right hand side.

However the third term in the right hand side of \eqref{CasiLimitEq} is not trivial to handle because  the functions
$$ (w,h)\to (b_\gamma(s,h)- w) f_0([h,1]) = \int_K w_* 1_{h_*\ge h} \, f_\gamma(s,d\varpi_*) - wf_0([h,1]) $$
and $(w_*,h_*)\to  w_* 1_{h_*\ge h}$  
are in general not continuous.

We were able to circumvent this difficulty when there are only a finite number of hierarchy levels in the population,

\subsection{Limit equation when there is a finite number of hierarchy level.}\label{SectionDiscrete}

Let us consider the case where there is a finite number $N$ of hierarchy levels $0\le h_1<h_2<...<h_N\le 1$.
We need to introduce some notations.
We denote $f_{0}(h_i)$ the proportion of individuals with hierarchy level $h_i$, $i=1,...,N$,
and $f_{0}^S(h_i)$ and $f_{0}^{NS}(h_i)$ the proportion of individuals with hierarchy level $h_i$ within the stubborn and non-stubborn
population. Thus $f_{0}(h_i) = \alpha f_{0}^S(h_i) + (1-\alpha)f_{0}^{NS}(h_i)$.
We can then write the distribution of hierarchy level as
\begin{eqnarray}\label{f0dh}
f_0(dh)= \sum_{i=1}^{N} f_0(h_i)\delta_{h_i} = \alpha f^S_0(dh) + (1-\alpha) f_0^{NS}(dh)
\end{eqnarray}
where $ f^S_0(dh) = \sum_{i=1}^{N} f_0^S(h_i)\delta_{h_i}$ and $ f_0^{NS}(dh)= \sum_{i=1}^{N} f_0^{NS}(h_i)\delta_{h_i}$.

In the same way we write  the measure $f_\gamma^{NS}(\tau,d\varpi)$ as
\begin{eqnarray}\label{fgama}
f_\gamma^{NS}(\tau,d\varpi) = \sum_{i=1}^N f_0^{NS}(h_i)\left(f_{\gamma}^{NS,i}(\tau,dw) \otimes  \delta_{h_{i}} \right)
\end{eqnarray}
where $f_{\gamma}^{NS,i}(\tau,dw)$ is the distribution of opinion in the non-stubborn $h_i$-population.

Equation \eqref{CasiLimitEq} then becomes a system of N coupled equations for $f_{\gamma}^{NS,i}(\tau,dw)$, $i=1,...,N$.
Indeed \eqref{CasiLimitEq} reads
\begin{equation*}
\begin{split}
& \sum_{i=1}^N f_0^{NS}(h_i) \int_{-1}^1 \phi(w,h_i)\, f^{NS,i}_{\gamma}(\tau,dw)
= \sum_{i=1}^N f_0^{NS}(h_i) \int_{-1}^1 \phi(w,h_i) \, f^{NS,i}_0(dw) \\
&
+  p \sum_{i=1}^N f_0^{NS}(h_i) \int_0^\tau\int_{-1}^1  \phi_w(w,h_i) \left( m_\gamma(s) - w\right) f_{\gamma}^{NS,i}(s,w)ds \\
&  +  (1-p)\sum_{i=1}^N f_0^{NS}(h_i) \int_0^\tau \int_{-1}^1  \phi_w(w,h_i)\left( b_\gamma(s,h_i)- w \right) f_0\left([h_i,1]\right)f^{NS,i}_{\gamma}(s,dw)ds\\& + \int_0^\tau R(s,\gamma)\,ds,
\end{split}
\end{equation*}
i.e. for any $i=1,...,N$, and for any $\phi\in C^2([-1,1])$,
\begin{equation}\label{CasiLimit2}
\begin{split}
& \int_{-1}^1 \phi(w)\, f^{NS,i}_{\gamma}(\tau,dw)
= \int_{-1}^1 \phi(w) \, f^{NS,i}_0(dw) \\
& \quad +  p \int_0^\tau\int_{-1}^1  \phi'(w) \left( m_\gamma(s) - w\right) f_{\gamma}^{NS,i}(s,dw)ds \\
& \quad  +  (1-p) \int_0^\tau \int_{-1}^1  \phi'(w)\left( b_\gamma(s,h_i)- w \right) f_0\left([h_i,1]\right)f^{NS,i}_{\gamma}(s,dw)ds\\
& \quad + O(\gamma)\tau \\
\end{split}
\end{equation}
where we used that $|R(s,\gamma)|\le 2\gamma \|\phi_{ww}\|_\infty$.

We can now pass to the limit $\gamma\to 0$. Recall from Remark \ref{Remark} that we write the limit $f$ of $f_\gamma$ as
\begin{equation}\label{Notation1}
 f(\tau,d\varpi)=\alpha f^S_0(d\varpi) +(1-\alpha)f^{NS}(\tau,d\varpi), 
\end{equation}
with 
\begin{equation}\label{Notation2}
 f^{NS}(\tau,d\varpi) = \sum_{i=1}^N f_0^{NS}(h_i) (f^{NS,i}(\tau,dw)\otimes \delta_{h_i})
\end{equation}
in the same way as \eqref{fgama}. 
Denote also
\begin{equation}\label{Equ102}
 m(\tau)=\int_{-1}^1 w\,f(\tau,d\varpi), \qquad
b(\tau,h) = \int_{K} w_* 1_{h_*\ge h} \frac{f(\tau,d\varpi_*)}{f_{0}([h,1])}.
\end{equation}
We then have

\begin{thm} \label{GrazingLimitDiscrete}
The measures $f^{NS,i}(\tau,dw)$, $i=1,..,N$, solves the system
\begin{equation}\label{eqdiscreta}
\begin{split}
& \int_{-1}^1 \phi(w)\, f^{NS,i}(\tau,dw)
= \int_{-1}^1 \phi(w) \, f^{NS,i}_0(dw) \\
&   +  p \int_0^\tau\int_{-1}^1  \phi'(w) \left( m(s) - w\right) f^{NS,i}(s,dw)ds \\
&  +  (1-p) \int_0^\tau \int_{-1}^1  \phi'(w)\left( b(s,h_i)- w \right) f_0\left([h_i,1]\right)f^{NS,i}(s,dw)ds
\end{split}
\end{equation}
for any $\phi\in C^1([-1,1])$.
\end{thm}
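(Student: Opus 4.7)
The plan is to pass to the limit $\gamma\to 0$ in each of the four terms of \eqref{CasiLimit2}, exploiting the uniform convergence furnished by Theorem \ref{GrazingConvergence} together with the discrete structure of the hierarchy distribution. The discreteness is what makes this theorem possible where the general statement fails: it allows the non-continuous indicator $1_{h_*\ge h_i}$ to be replaced by a continuous function on the support of $f_\gamma$, circumventing the very difficulty pointed out just before Subsection \ref{SectionDiscrete}.

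First I would upgrade the convergence of the global measure to convergence of each slice $f_\gamma^{NS,i}$. Because the dynamics preserve hierarchy, the $h$-marginal of $f_\gamma^{NS}$ is constant equal to $f_0^{NS}(dh)=\sum_i f_0^{NS}(h_i)\delta_{h_i}$, and so is the marginal of the limit $f^{NS}$; this makes the decomposition \eqref{Notation2} meaningful. For each index $i$ with $f_0^{NS}(h_i)>0$ (the others can be discarded) I choose a continuous cutoff $\chi_i\in C([0,1])$ with $\chi_i(h_i)=1$ and $\chi_i(h_j)=0$ for $j\ne i$, which is possible since $\{h_1,\dots,h_N\}$ is finite. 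Testing the convergence $f_\gamma^{NS}\to f^{NS}$ from Theorem \ref{GrazingConvergence} against the continuous bounded function $\phi(w)\chi_i(h)$ yields $f_\gamma^{NS,i}(\tau,\cdot)\to f^{NS,i}(\tau,\cdot)$ weakly in $P([-1,1])$, uniformly for $\tau\in[0,T]$. The same idea handles the drift $b_\gamma(s,h_i)$: the function $1_{h_*\ge h_i}$ agrees on the support of $f_\gamma(s,\cdot_*)$ with a continuous function $\eta_i\in C([0,1])$ taking the value $1$ on $\{h_j:h_j\ge h_i\}$ and $0$ elsewhere, so
\[ b_\gamma(s,h_i) = \frac{1}{f_0([h_i,1])}\int_K w_*\,\eta_i(h_*)\, f_\gamma(s,d\varpi_*), \]
whose integrand is continuous and bounded on $K$. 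By Remark \ref{Remark}, $f_\gamma\to f$ in $C([0,T],P(K))$, whence $b_\gamma(\cdot,h_i)\to b(\cdot,h_i)$ uniformly on $[0,T]$. An analogous and easier argument gives $m_\gamma\to m$ uniformly on $[0,T]$, as noted in the text.

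Armed with these uniform convergences, passing to the limit in \eqref{CasiLimit2} is routine. In each of the two drift integrals I would split, for instance,
\[ \phi'(w)(m_\gamma(s)-w)=\phi'(w)(m_\gamma(s)-m(s))+\phi'(w)(m(s)-w), \]
bound the first piece by $\|\phi'\|_\infty |m_\gamma(s)-m(s)|$ (uniformly small in $s$), and treat the second piece via weak convergence of $f_\gamma^{NS,i}(s,\cdot)$ tested against the fixed continuous bounded function $w\mapsto \phi'(w)(m(s)-w)$; since these convergences are uniform in $s\in[0,T]$, dominated convergence gives the limit of the $s$-integral on $[0,\tau]$. The exact same scheme applies to the term containing $b_\gamma(s,h_i)$. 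The remainder is dispatched by the bound $|R(s,\gamma)|\le 2\gamma\|\phi_{ww}\|_\infty$ already established. This yields \eqref{eqdiscreta} for every $\phi\in C^2([-1,1])$; a standard density argument then extends the identity to $\phi\in C^1([-1,1])$, since only $\phi$ and $\phi'$ appear in the limiting equation.

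The main obstacle is the convergence of $b_\gamma(s,h_i)$: it is precisely the discontinuity of $(w_*,h_*)\mapsto w_*1_{h_*\ge h}$ on $K$ that prevented the general argument from closing, and the whole point of restricting to finitely many hierarchy levels is to make that indicator coincide with a continuous function on the relevant support, as described above.
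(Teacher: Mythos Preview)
Your proposal is correct and follows essentially the same strategy as the paper: both exploit the finiteness of the hierarchy levels to pass to the limit in the troublesome $b_\gamma$-term, after first reducing to the slice measures $f_\gamma^{NS,i}$. The only cosmetic difference is that the paper expands $f_0([h_i,1])\,b_\gamma(s,h_i)$ explicitly as the finite sum $\alpha\sum_{j\ge i}f_0^S(h_j)m_0^{S,j}+(1-\alpha)\sum_{j\ge i}f_0^{NS}(h_j)m_\gamma^{NS,j}(s)$ and passes to the limit term by term, whereas you replace the indicator $1_{h_*\ge h_i}$ by a continuous surrogate $\eta_i$ agreeing with it on $\{h_1,\dots,h_N\}$ and invoke weak convergence of $f_\gamma$ directly; these are two packagings of the same idea.
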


\begin{proof}
The convergence $f_\gamma^{NS}\to f^{NS}$ proved in Theorem \ref{GrazingConvergence} is equivalent
to $f_\gamma^{NS,i}\to f^{NS,i}$, $i=1,...,N$, i.e. for any $\phi\in C([-1,1])$, 
$$ \int_{-1}^1 \phi(w)\, f^{NS,i}_{\gamma}(\tau,dw) \to \int_{-1}^1 \phi(w)\, f^{NS,i}(\tau,dw) $$
uniformly for $\tau\in [0,T]$, $T>0$.

We want to pass to the limit $\gamma\to 0$ in \eqref{CasiLimit2}.
As explained right after \eqref{CasiLimitEq}, we only need to concentrate on the last term in \eqref{CasiLimit2}, namely
\begin{eqnarray*}
 && \int_0^\tau \int_{-1}^1  \phi'(w)  b_\gamma(s,h_i) f_0([h_i,1])\, f^{NS,i}_{\gamma}(s,dw)ds  \\ 
 &&  - f_0([h_i,1])\int_0^\tau \int_{-1}^1  \phi'(w) w \, f^{NS,i}_{\gamma}(s,dw)ds =: A-B.
\end{eqnarray*}
We can clearly pass to the limit in the second term B. Concerning the first term A, notice first that 
\begin{equation*}\label{terminodiscreto}
\begin{split}
 f_0\left([h_i,1]\right) b_\gamma(s,h_i) 
   & =  \alpha \sum_{j\ge i} f_0^S(h_j)\int_{-1}^1 w_*  f_0^{S,j}(dw_*) \\ 
& \quad + (1-\alpha) \sum_{j\ge i} f_0^{NS}(h_j) \int_{-1}^1  w_*  f_\gamma^{NS,j}(s,dw_*).
\end{split}
\end{equation*}
Letting
$$ m_0^{S,j} = \int_{-1}^1 w_*  f_0^{S,j}(dw_*), \qquad m_\gamma^{NS,j}(s) = \int_{-1}^1 w_*  f_\gamma^{NS,j}(s,dw_*) $$
be the mean opinion in the stubborn and non-stubborn $h_j$-population, we obtain
\begin{eqnarray*}
A & =& \alpha \sum_{j\ge i} f_0^S(h_j) m_0^{S,j} \int_0^\tau \int_{-1}^1  \phi'(w)\, f^{NS,i}_{\gamma}(s,dw)ds   \\
&&+ (1-\alpha) \sum_{j\ge i} f_0^S(h_j)   m_\gamma^{NS,j}(s) \int_0^\tau\int_{-1}^1  \phi'(w)  \, f^{NS,i}_{\gamma}(s,dw)ds.
\end{eqnarray*}
Since
$$\int_{-1}^1  \phi'(w)\, f^{NS,i}_{\gamma}(s,dw)\to \int_{-1}^1  \phi'(w)\, f^{NS,i}(s,dw)$$ and
$$m_\gamma^{NS,j}\to \int_{-1}^1 w_*  f^{NS,j}(s,dw_*) =: m^{NS,j}$$ uniformly in $[0,T]$, $T>0$, we can pass to the limit
in $A$.
\end{proof}

\subsection{A regularized equation}

Introducing the function $J(h)=1_{h\le 0} + p1_{h>0}$ we can rewrite the opinion updating rule \eqref{InterRules} as $w'=w + \gamma(w_*-w)$ with probability $J(h-h_*)$. 
Equation \eqref{BoltzmannEqu2} becomes 
\begin{equation*} 
\begin{split}
 & \dfrac{d}{dt} \int_{K} \phi(\varpi)\,f_\gamma^{NS}(t,d\varpi) \\
 &  =  \int_{K^2} [\phi(w+\gamma(w_*-w),h)-\phi(w,h)]J(h-h_*) \, f_\gamma^{NS}(t,d\varpi) f_\gamma(t,d\varpi_*).
\end{split}
\end{equation*}
which yields in the limit $\gamma\approx 0$ the equation 
\begin{equation*} 
\begin{split}
 & \dfrac{d}{d\tau} \int_K \phi(\varpi)\,f^{NS}(\tau,d\varpi) \\
 &  =  \int_K  \phi_w(\varpi) \Big\{ \int_K (w_*-w)J(h-h_*) f_\gamma(t,d\varpi_*) \Big\} \, f_\gamma^{NS}(t,d\varpi).
\end{split}
\end{equation*}
Notice the main difficulty we faced before in order to justfy the limit $\gamma\to 0$ and prove that the limit of the $f_\gamma$ satisfies 
this limit equation was the lack of regularity of $J$. 

We can circumvent this difficulty considering a smooth approximation $J_\eps$ such that $0\le J_\eps\le 1$ and $J_\eps(h)=1$ if $h\le 0$, 
$J_\eps(h)=p$ if $h\ge \eps$ for a small $\eps>0$. 
The limit equation when $\gamma\to 0$ is then 
\begin{equation*} 
\begin{split}
  \dfrac{d}{d\tau} \int_K \phi(\varpi)\,f^{NS}(\tau,d\varpi) 
   =  \int_K  \phi_w(\varpi) v_\eps[f(\tau)](\varpi)  f^{NS}(\tau,d\varpi)
\end{split}
\end{equation*}
where 
$$ v_\eps[f(\tau)](\varpi) =  \int_K (w_*-w)J_\eps(h-h_*) f(\tau,d\varpi_*), $$ 
which is the weak formulation of the transport equation 
$$ \frac{\p}{\p t} f^{NS} + \frac{\p}{\p w} \Big( v_\eps[f(t)](w) f^{NS} \Big) = 0. $$ 
Since $J_\eps$ is bounded and globally Lipschitz, this equation has a unique solution (see e.g. \cite{AS}). 
The same kind of argument as before easily shows this unique solution is the limit of the $f_\gamma^{NS}$ when $\gamma\to 0$.
Notice eventually that  when there are finitely many hierarchy level $h_1,..,h_N$ then the original (with $J$) and the regularized dynamic (with $J_\eps$) coincide 
if $\eps\ll 1$ (namely if $\eps<\min_{i\neq j} |h_i-h_j|$).

\section{Long-time behaviour}

According to Theorem \ref{GrazingLimitDiscrete}, when the distribution $f_0(dh)$ of hierarchy level
is finite discrete, any solution $f^{NS}_\gamma$ of \eqref{BoltzmannEqu2} converges as $\gamma\to 0$
(up to a subsequence and time rescaling) to a solution of equation  \eqref{LimitEqu}, namely
\begin{equation}\label{LimitEqu5}
\begin{split}
& \dfrac{d}{dt}\int_{K}\phi(\varpi)\,f^{NS}(t,d\varpi)
 =  p\int_K \phi_w(\varpi) \left( m(t) - w\right) \,f^{NS}(t,d\varpi) \\
& \qquad +  (1-p)\int_K \phi_w(\varpi)\left( b(t,h)- w \right) f_{0}([h,1])\,f^{NS}(t,d\varpi). 
\end{split}
\end{equation}
From now on we denote the time by $t$ instead of $\tau$ for ease of notation.

In this section we investigate the long time behaviour of $f^{NS}$.

Notice first that when $p=1$, the hierarchy do not impact on the dynamic.
This is the case studied in \cite{ppss} where it was proved that $f^{NS}(t)$ converge as $t\to +\infty$
to a Dirac mass which is located at  the initial mean opinion of (i) the whole population, $m(0)$, if $\alpha=0$, 
and of (ii) the stubborn population, $m^S_0$, if $\alpha>0$. Thus as $t\to +\infty$, 
\begin{equation}\label{Casep1}
 f^{NS}(t)\to \delta_{m_\infty^{NS}},\qquad \text{where} \qquad
m_\infty^{NS} =
\begin{cases}
m(0) \quad & \text{if $\alpha=0$} \\
 m_0^S \quad & \text{if $\alpha>0$}.
\end{cases}
\end{equation}

\medskip

We assume from now that $p\in [0,1)$. 
We also assume that the hierarchy level can only take the values $0\le h_1<h_2<...<h_N\le 1$ and w.l.o.g that 
there are top-ranked individual in the population i.e. $f_0(h_N)>0$. 

Recall that $f^{NS,i}(t,dw)$ is the distribution of opinion in the non-stubborn population with hierarchy level $h_i$ 
(see \eqref{Notation1} and \eqref{Notation2}). 
Denote 
\begin{equation}\label{Notation3}
 m^{NS,i}(t):=m^{NS,h_i}(t)=\int_{K} w\,f^{NS,i}(t,dw)\qquad i=1,...,N 
\end{equation}
the mean opinion in the non-stubborn $h_i$ population, and 
\begin{equation}\label{Notation4}
 m^{S,i}_0:=m^{S,h_i}_0=\int_{K} w\,f^{S,i}_0(dw)\qquad i=1,...,N 
\end{equation}
the mean opinion in the stubborn $h_i$ population.

\medskip

We will first verify that the tendency-to-compromise modelled by the interaction rule \eqref{InterRules}
implies that the opinion dynamic is contractive: the distribution of opinion in a non-stubborn subpopulation of a given hierarchy
level $h_i$ shrinks to a point, namely its mean value $m^{NS,i}(t)$.
As a consequence, we will only need to study the asymptotic behaviour of the $m^{NS,i}(t)$, $i=1,..,N$.

\subsection{Contractive dynamic}


Given some $f \in P ([-1, 1])$, we recall that the cumulative distribution function (cdf) $F : \mathbb{R}\rightarrow [0, 1]$ of $f$ is defined as $F(x) = f((-\infty, x])$. 
The generalized inverse of $F$ is  $X:(0, 1] \rightarrow [-1, 1]$ defined by
\[ X(\rho ) = \inf\left\lbrace x \in [-1, 1]\, s.t. \; F(x) \geq \rho \right\rbrace .\]
Notice that $X(1)$ and $X(0^+):=\lim_{r\to 0^+}X(r)$ are the right and left end points of the support of $f$. 

The generalized inverse enables us to  rewrite  equation \eqref{eqdiscreta} satisfied by $f^{NS,i}$ 
in terms of the generalized inverse of its cdf. Using the resulting equation we will easily show that 
the support of $f^{NS,i}(t)$ shrinks to a point exponentially fast. 
To do so we will need the following result (see  Theorem 3.1 in \cite{ANT} and Prop. 3.1 in \cite{ppss}): 

\begin{prop} \label{EquGenInv}
Let $v:(t,x)\in [0,+\infty)\times [-1,1]\to v(t,x)\in \R$ be continuous in $(t,x)$ and globally Lipschitz in $x$. 
Then $f \in C([0,+\infty),P([-1,1]))$ is a weak solution of
\begin{equation*}
  \p_tf_t + \p_x(v(t,x)f_t) = 0,
\end{equation*}
in the sense that for any $\phi\in C^1([-1,1])$ and any $t>0$,
\begin{equation}\label{Equ300}
  \int_{-1}^1 \phi(x)\,df_t(x) =  \int_{-1}^1 \phi(x)\,df_0(x)
+ \int_0^t \int_{-1}^1 \phi'(x)v(s,x)\,df_s(x)ds,
\end{equation}
if and only if for any $r\in (0,1]$, $X_t(r)$ is a solution of
\begin{equation*}
  \p_tX_t(r) = v(t,X_t(r)).
\end{equation*}
\end{prop}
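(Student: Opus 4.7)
The plan is to exploit the key fact that the generalized inverse $X_t$ pushes forward the Lebesgue measure on $(0,1]$ onto $f_t$, so that for any continuous $\phi:[-1,1]\to\R$ and any $t\ge 0$,
\[
\int_{-1}^{1}\phi(x)\,f_t(dx) \;=\; \int_{0}^{1}\phi(X_t(r))\,dr.
\]
This is a standard property of pseudo-inverses of cdf's: since $r\mapsto X_t(r)$ is nondecreasing and left-continuous with $F_t(X_t(r))\ge r$ and $X_t(F_t(x))\le x$, the pushforward $(X_t)_\#(dr)$ has cdf $F_t$ and therefore equals $f_t$. Using this identity, the weak equation \eqref{Equ300} becomes
\[
\int_0^1 \phi(X_t(r))\,dr = \int_0^1 \phi(X_0(r))\,dr + \int_0^t\!\!\int_0^1 \phi'(X_s(r))\,v(s,X_s(r))\,dr\,ds.
\]

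For the ($\Leftarrow$) direction I would assume $r\mapsto X_t(r)$ satisfies the ODE for every $r$. Since $v$ is continuous in $(t,x)$, the chain rule gives $\frac{d}{dt}\phi(X_t(r))=\phi'(X_t(r))v(t,X_t(r))$; integrating first in $t$ over $[0,t]$ and then in $r$ over $(0,1]$ (Fubini being justified by boundedness of $\phi$, $\phi'$ and $v$ on $[-1,1]$) produces exactly the display above, which by the pushforward identity is \eqref{Equ300}.

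For the ($\Rightarrow$) direction I would introduce the unique Lagrangian flow $Y_t(r)$ solving $\partial_t Y_t(r)=v(t,Y_t(r))$ with $Y_0(r)=X_0(r)$; this is well defined and jointly continuous in $(t,r)$ by the Cauchy-Lipschitz theorem since $v$ is Lipschitz in $x$. By the ($\Leftarrow$) direction applied to $Y_t$, the pushforward $g_t:=(Y_t)_\#(dr)$ is a weak solution of the transport PDE with initial datum $g_0=f_0$. Lipschitzness of $v$ yields uniqueness of measure-valued solutions of the continuity equation, for instance directly through a Gronwall estimate on $W_1(f_t,g_t)$ using the one-dimensional representation $W_1(\mu,\nu)=\int_0^1|X^\mu(r)-X^\nu(r)|\,dr$. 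Hence $g_t=f_t$. Finally, the flow $Y_t$ preserves both monotonicity and left-continuity in $r$ (monotonicity because trajectories cannot cross by Lipschitz uniqueness, left-continuity by continuous dependence on initial data), so $Y_t$ coincides with the left-continuous nondecreasing pseudo-inverse of the cdf of $g_t=f_t$, i.e. $Y_t=X_t$, and therefore $X_t$ solves the ODE.

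The main obstacle is the uniqueness step in the ($\Rightarrow$) direction: one must rule out distinct measure-valued solutions of the continuity equation sharing initial data $f_0$. In this one-dimensional Lipschitz setting the argument is relatively clean because $W_1$ on $P([-1,1])$ is controlled by the $L^1$-distance between generalized inverses, closing via Gronwall. A secondary delicate point is verifying that $r\mapsto Y_t(r)$ truly inherits the left-continuity and monotonicity of $X_0$, so that the identification $Y_t=X_t$ holds pointwise in $r$ rather than merely almost everywhere, which is what allows one to read off the ODE for every $r\in(0,1]$.
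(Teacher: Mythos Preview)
The paper does not supply its own proof of this proposition: it is stated with a citation to Theorem~3.1 in \cite{ANT} and Proposition~3.1 in \cite{ppss}, and the text moves on immediately to use it. So there is no in-paper argument to compare against.

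Your sketch is essentially the standard route to this equivalence and is sound in outline. The pushforward identity and the $(\Leftarrow)$ direction are unproblematic. For $(\Rightarrow)$, building the Lagrangian flow $Y_t$ from $X_0$, identifying $g_t=(Y_t)_\#dr$ as a solution, and then invoking uniqueness of measure solutions of the continuity equation with Lipschitz drift is exactly the right strategy; the subsequent step that $Y_t$ inherits monotonicity and left-continuity from $X_0$ (no crossing of characteristics, continuous dependence on initial data) so that $Y_t$ is \emph{the} generalized inverse of $F_t$ is correctly identified and handled.

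One caution on the uniqueness step: the specific mechanism you propose, a Gronwall on $W_1(f_t,g_t)=\int_0^1|X_t^{f}(r)-X_t^{g}(r)|\,dr$, risks circularity, since differentiating the right-hand side in $t$ presupposes control on $\partial_t X_t^{f}(r)$, which is precisely what you are trying to establish. Uniqueness for the linear continuity equation with Lipschitz velocity is of course classical, but it is cleaner to obtain it by duality (test against solutions of the backward transport equation $\partial_t\psi+v\,\partial_x\psi=0$, available via the flow), or by the superposition principle, rather than through the generalized-inverse formula. With that substitution your argument goes through.
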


We can then prove that 

\begin{prop}\label{Prop_Mean_Mixed} For any $i=1,..,N$ and $t\ge 0$, 
\begin{equation}\label{SupportShrinks}
|conv(supp\,f^{NS,i}(t))| \le |conv(supp\,f_0^{NS,i})|e^{-(p+(1-p)f_0([h_i,1]))t}
\end{equation}
where $|conv(supp\,f^{NS,i})(t)|$ denotes the length of the convex hull of the support of
$f^{NS,i}(t)$.
\end{prop}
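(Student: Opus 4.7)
The strategy is to recognize that \eqref{eqdiscreta} is the weak form of a transport equation on $[-1,1]$ whose velocity field is \emph{affine} in $w$, then invoke Proposition \ref{EquGenInv} to transfer the dynamic to the generalized inverse of the cdf, where the analysis becomes a trivial linear ODE.

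First I would fix $i$ and rewrite \eqref{eqdiscreta} as the transport equation
$$ \p_t f^{NS,i} + \p_w\bigl(v_i(t,w)\,f^{NS,i}\bigr) = 0, $$
where
$$ v_i(t,w) = p\bigl(m(t)-w\bigr) + (1-p)\bigl(b(t,h_i)-w\bigr)f_0([h_i,1]) = -\lambda_i w + c_i(t), $$
with the contraction rate
$$ \lambda_i := p + (1-p)f_0([h_i,1]) $$
and the (time-dependent but $w$-independent) drift
$$ c_i(t) := p\,m(t) + (1-p)\,b(t,h_i)\,f_0([h_i,1]). $$
To apply Proposition \ref{EquGenInv} I must check that $v_i$ is continuous in $(t,w)$ and globally Lipschitz in $w$. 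The Lipschitz property is immediate with constant $\lambda_i\le 1$. For continuity in $t$, the map $t\mapsto m(t)$ is continuous because $f(t)$ is weakly continuous in $t$ by Theorem \ref{GrazingConvergence}; in the finite-hierarchy setting $f_0([h_i,1])b(t,h_i)= \alpha\sum_{j\ge i}f_0^S(h_j)m^{S,j}_0 + (1-\alpha)\sum_{j\ge i}f_0^{NS}(h_j)m^{NS,j}(t)$ is a finite sum of weakly-continuous integrals, hence continuous.

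Next I would apply Proposition \ref{EquGenInv} to conclude that the generalized inverse $X^i_t(r)$ of the cdf of $f^{NS,i}(t)$ satisfies
$$ \p_t X^i_t(r) = v_i(t,X^i_t(r)) = -\lambda_i X^i_t(r) + c_i(t) \qquad \text{for every } r\in(0,1]. $$
The key observation is then that the drift $c_i(t)$ is independent of $r$: subtracting this ODE at two values $r_1,r_2\in(0,1]$ gives
$$ \p_t\bigl(X^i_t(r_1)-X^i_t(r_2)\bigr) = -\lambda_i \bigl(X^i_t(r_1)-X^i_t(r_2)\bigr), $$
so
$$ X^i_t(r_1)-X^i_t(r_2) = \bigl(X^i_0(r_1)-X^i_0(r_2)\bigr)\,e^{-\lambda_i t}. $$
Finally, recalling that $X^i_t(1)$ and $X^i_t(0^+)=\lim_{r\to 0^+}X^i_t(r)$ are the right and left endpoints of the support of $f^{NS,i}(t)$, so that $|\mathrm{conv}(\mathrm{supp}\,f^{NS,i}(t))|=X^i_t(1)-X^i_t(0^+)$, I take $r_1=1$ and let $r_2\to 0^+$ to obtain
$$ |\mathrm{conv}(\mathrm{supp}\,f^{NS,i}(t))| = |\mathrm{conv}(\mathrm{supp}\,f_0^{NS,i})|\,e^{-\lambda_i t}, $$
which is exactly \eqref{SupportShrinks}.

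The only nontrivial point is the verification of the continuity hypothesis of Proposition \ref{EquGenInv}; everything else reduces to the single algebraic observation that the velocity field is affine in $w$ with a contraction coefficient $\lambda_i$ that does not depend on the state. This is precisely where the tendency-to-compromise structure pays off: the $w$-derivative of the drift equals $-\lambda_i<0$ for every agent in the $h_i$-subpopulation, which forces the support to contract exponentially at rate $\lambda_i$ regardless of how complicated the coupling through $m(t)$ and $b(t,h_i)$ may be.
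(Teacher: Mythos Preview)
Your proof is correct and follows essentially the same route as the paper's: rewrite \eqref{eqdiscreta} as a transport equation with the affine velocity $v_i(t,w)=p(m(t)-w)+(1-p)(b(t,h_i)-w)f_0([h_i,1])$, invoke Proposition \ref{EquGenInv} to pass to an ODE for the generalized inverse $X_t(r)$, subtract at two values of $r$ so that the $w$-independent drift cancels, and send $r\to 0^+$. Your version is slightly cleaner in two respects---you subtract the ODEs directly rather than differentiating the squared difference, and you explicitly verify the continuity hypothesis of Proposition \ref{EquGenInv}---and you correctly note that the argument in fact yields equality in \eqref{SupportShrinks}.
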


\begin{proof}
Notice that equation \eqref{eqdiscreta} satisfied by $f_t^{NS,i}$ can be written as \eqref{Equ300} with 
$v(t,x) =  p ( m(t) - x) +  (1-p)( b(t,h_i)- x) f_0([h_i,1])$. 
Thus according to Prop. \ref{EquGenInv}, the generalized inverse $X_t$ of the cdf of $f_t^{NS,i}$ satisfies 
$$ \p_tX_t(r)
= p(m(t) - X_t(r))+ (1-p)\left( b(t,h_i)- X_t(r) \right) f_{0}\left([h_i,1]\right) $$
for $r\in (0,1]$. 
Then for $s\in (0,1]$,
\begin{eqnarray*}
\p_t\left( X_t(1)-X_t(s)\right)^{2}
= -2|X_t(1)-X_t(s)|^2 (p+(1-p)f_0([h_i,1]))
\end{eqnarray*}
so that
$$ |X_t(1)-X_t(s)|\le |X_0(1)-X_0(s)|e^{-(p+(1-p)f_0([h_i,1]))t}. $$
Sending $s\to 0^+$ gives the result.
\end{proof}

Notice that $p+(1-p)f_0([h_i,1])>0$ for any $i=1,..,N$ since $f_0([h_i,1])\ge f_0(h_N)>0$. 
It thus follows from this result that the support of $f^{NS,i}$ shrink to a point which means that individuals tend 
to share the same opinion. 
Moreover lower-ranked individuals coordinate faster than higher-ranked ones due to the term $f_0([h_i,1])$ which embodies 
the higher hierarchy pressure faced by lower-ranked individual. 
 
As a consequence, to study the long-time behaviour of the $f^{NS,i}$, it is enough to study the mean opinion $m^{NS,i}(t)$ defined in \eqref{Notation3}.
Indeed it follows from the definition \eqref{W1} of the $W_1$-distance that  
\begin{equation}\label{Eq500}
W_{1}\left( f^{NS,i}(t),\delta_{m^{NS,i}(t)}\right)\le e^{-(p+(1-p)f_0([h_i,1]))t}. 
\end{equation}

The following result says that the mean opinion $m^{NS,i}$ satisfy a linear system:

\begin{prop}
The mean opinions $m^{NS,1},...,m^{NS,N}$ satisfy 
\begin{equation}\label{SystemMeanOpDiscrete}
\begin{split}
\frac{d}{dt} m^{NS,i}(t)
& =  \alpha \sum_{j=1}^N m_0^{S,j} (p+(1-p)1_{\{j\ge i\}}) f_0^S(h_j) \\
& \quad + (1-\alpha) \sum_{j=1}^N  (p+(1-p)1_{\{j\ge i\}}) f_0^{NS}(h_j) m^{NS,j}(t) \\
& \quad  - \Big(p+(1-p)f_0([h_i,1])  \Big)m^{NS,i}(t).
\end{split}
\end{equation}
\end{prop}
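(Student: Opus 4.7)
The plan is to specialize the weak equation \eqref{eqdiscreta} to the test function $\phi(w)=w$, differentiate in time, and then rewrite the resulting quantities $m(t)$ and $b(t,h_i)f_0([h_i,1])$ in terms of the unknowns $m^{NS,j}(t)$ and the fixed stubborn means $m_0^{S,j}$.

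First, taking $\phi(w)=w$ in \eqref{eqdiscreta}, the right-hand side integrals in $w$ reduce to $m(s)-m^{NS,i}(s)$ and $b(s,h_i)-m^{NS,i}(s)$ (here we use that $f^{NS,i}(s,dw)$ is a probability measure on $[-1,1]$), so differentiating the resulting integral identity with respect to $t$ yields
\begin{equation*}
\frac{d}{dt}m^{NS,i}(t) = p\bigl(m(t)-m^{NS,i}(t)\bigr) + (1-p)\bigl(b(t,h_i)-m^{NS,i}(t)\bigr)f_0([h_i,1]).
\end{equation*}
The coefficient of $-m^{NS,i}(t)$ on the right immediately collects to $p+(1-p)f_0([h_i,1])$, matching the last line of \eqref{SystemMeanOpDiscrete}. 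It remains to expand $m(t)$ and $b(t,h_i)f_0([h_i,1])$.

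Next I would substitute the decomposition $f(t,d\varpi)=\alpha f_0^S(d\varpi)+(1-\alpha)f^{NS}(t,d\varpi)$ from \eqref{Notation1} together with the discrete structure \eqref{Notation2} into the definitions \eqref{Equ102}. For $m(t)$ this gives
\begin{equation*}
m(t) = \alpha\sum_{j=1}^N f_0^S(h_j)m_0^{S,j} + (1-\alpha)\sum_{j=1}^N f_0^{NS}(h_j)m^{NS,j}(t),
\end{equation*}
while for $b(t,h_i)f_0([h_i,1])$ the indicator $1_{h_*\ge h_i}$ simply restricts the sum to $j\ge i$:
\begin{equation*}
b(t,h_i)f_0([h_i,1]) = \alpha\sum_{j\ge i} f_0^S(h_j)m_0^{S,j} + (1-\alpha)\sum_{j\ge i} f_0^{NS}(h_j)m^{NS,j}(t).
\end{equation*}

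Finally I would combine the two contributions $p\,m(t)+(1-p)\,b(t,h_i)f_0([h_i,1])$ index by index: for each $j$, the coefficient in front of $f_0^S(h_j)m_0^{S,j}$ (and likewise in front of $f_0^{NS}(h_j)m^{NS,j}(t)$) is $p$ when $j<i$ and $p+(1-p)=1$ when $j\ge i$, both captured by $p+(1-p)1_{\{j\ge i\}}$. Reassembling gives exactly \eqref{SystemMeanOpDiscrete}. There is no real obstacle here beyond careful bookkeeping; the only point worth checking is that differentiation in $t$ of the integral form of \eqref{eqdiscreta} is legitimate, which follows from the fact that $m(s)$, $b(s,h_i)$ and $s\mapsto \int w\,f^{NS,i}(s,dw)$ are continuous in $s$ (a consequence of $f_\gamma\to f$ in $C([0,T],P(K))$ from Theorem \ref{GrazingConvergence} and the fact that the integrands are continuous and bounded on $[-1,1]$).
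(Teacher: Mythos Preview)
Your proof is correct and follows essentially the same approach as the paper: take $\phi(w)=w$ in \eqref{eqdiscreta}, obtain the ODE $\frac{d}{dt}m^{NS,i}=p(m-m^{NS,i})+(1-p)(b(\cdot,h_i)-m^{NS,i})f_0([h_i,1])$, then expand $m(t)$ and $b(t,h_i)f_0([h_i,1])$ via the stubborn/non-stubborn decomposition \eqref{Notation1}--\eqref{Notation2} and regroup. Your extra remarks on the legitimacy of differentiating in $t$ and on the index-by-index bookkeeping yielding the factor $p+(1-p)1_{\{j\ge i\}}$ are welcome but do not depart from the paper's argument.
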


\begin{proof}
Taking $\phi(w)=w$ in \eqref{eqdiscreta} gives
\begin{equation}\label{mtNS}
\begin{split}
 \frac{d}{dt} m_t^{NS,i}
&= \int_{K}\left\{ p(m(t) - w) + (1-p)( b(t,h_i)- w ) f_{0}\left([h_i,1]\right)\right\} \,f^{NS,i}(t,dw)\\
 &=p(m(t) - m^{NS,i}(t)) + (1-p)(b(t,h_i)-m^{NS,i}(t)) f_0([h_i,1]).\\
\end{split}
\end{equation}
Moreover recalling notations \eqref{Notation1}, \eqref{Notation2}, \eqref{Notation3}, \eqref{Notation4}, 
$$ m(t) = \alpha \sum_j f_0^S(h_j)m_0^{S,j}
+ (1-\alpha) \sum_j f_0^{NS}(h_j)m^{NS,j} $$ 
and
\begin{eqnarray*}\label{fbt}
&& f_0\left([h_i,1]\right) b(t,h_i) := \int_K w_* 1_{h_*\ge h_i} f(t,d\varpi_*) \\
 &  & = \alpha \sum_{j\ge i} f_0^S(h_j)m_0^{S,j}
+ (1-\alpha) \sum_{j\ge i} f_0^{NS}(h_j)m^{NS,j}.
\end{eqnarray*}
Replacing in \eqref{mtNS} gives the result.
\end{proof}

%
%


\subsection{Long-time behaviour}

In this section we study the asymptotic behaviour of the solution of system \eqref{SystemMeanOpDiscrete}. 
Notice first that system \eqref{SystemMeanOpDiscrete} is a linear system 
$$ X'(t)=AX(t)+B \qquad \text{ for } X(t)=(m^{NS,1}(t),...,m^{NS,N}(t))^T $$ 
with
$$B=(B_1,...,B_N)^T,\qquad
B_i=\alpha \sum_{j=1}^N m_0^{S,j} (p+(1-p)1_{\{j\ge i\}}) f_0^S(h_j),$$
and matrix $A=(A_{ij})$ given by
$$ A_{ij} =
\begin{cases}
 (1-\alpha)f_0^{NS}(h_i)-\Big(p+(1-p)f_{0}\left([h_i,1]\right)\Big), \qquad & i=j, \\
 (1-\alpha)  \Big[p+(1-p)1_{\{j\ge i\}} \Big] f_0^{NS}(h_j), \qquad &  i\neq j .
\end{cases}
$$

We now analyse separately the cases $p=0$ of ``pure hierarchy",  and the case $p\in (0,1)$ when the
hierarchical transition of
information is disrupted with probability $p$.

\subsubsection{Analysis of the ``pure hierarchy" case $p=0$.}

If $p=0$  then
$$ B_i = \alpha \sum_{j=i}^n m_0^{S,j} f_0^S(h_j),\qquad i=1,...,n, $$
and $A$ is triangular superior:
$$ A_{ij} =
\begin{cases}
 (1-\alpha)f_0^{NS}(h_i)-f_{0}\left([h_i,1]\right), \qquad & i=j, \\
 (1-\alpha)  f_0^{NS}(h_j), \qquad & j> i,\\
 0  \qquad & j< i .
\end{cases}
$$
We can thus solve the system $X'(t)=AX+B$ starting from the last row up to the first one.
Since the last (resp. first) row corresponds to the highest (resp. lowest) hierarchy level,
opinion propagates from the highest hierarchy level $h_N$ down to the lowest $h_1$, in agreement with our intuition.

More precisely we have

\begin{thm}\label{Thmp0} For any $i=1,..,N$, 
$$ \lim_{t\to +\infty}f^{NS,i}(t) = \delta_{m_\infty^{NS,i}} $$
where
\begin{equation}\label{minfinito9}
 m_\infty^{NS,N} :=
\begin{cases}
m_0^N \quad & \text{if $f_0^S(h_N)=0$,}\\
m_0^{S,N} \quad & \text{if $f_0^S(h_N)>0$},
\end{cases}
\end{equation}
and for $i=1,...,N-1$,
\begin{equation}\label{minfinito10}
m_\infty^{NS,i}=
 \frac{(1-\alpha)\sum_{j\ge i+1}  f_0^{NS}(h_j) m_\infty^{NS,j}
+\alpha \sum_{j\ge i} f_0^S(h_j) m_0^{S,j} }
{\alpha f_0^S\left([h_i,1]\right) + (1-\alpha)f_{0}^{NS}\left([h_{i+1},1]\right)}.
\end{equation}

In particular if there is no stubborn individual in the subpopulation of hierarchy level $h_1,...,h_{N-1}$
then $m_\infty^{NS,i}=m^{NS,N}_\infty$ for $i=1,...,N-1$.
\end{thm}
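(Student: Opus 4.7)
The plan is to reduce the convergence of the measures $f^{NS,i}(t)$ to the convergence of their means $m^{NS,i}(t)$, and then solve the resulting linear ODE system \eqref{SystemMeanOpDiscrete} row by row, exploiting its triangular structure when $p=0$.

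First I note that, thanks to Proposition \ref{Prop_Mean_Mixed} and the Wasserstein bound \eqref{Eq500}, it is enough to show $m^{NS,i}(t)\to m_\infty^{NS,i}$ with the claimed values. Indeed,
\[
W_1\bigl(f^{NS,i}(t),\delta_{m_\infty^{NS,i}}\bigr)\le W_1\bigl(f^{NS,i}(t),\delta_{m^{NS,i}(t)}\bigr)+|m^{NS,i}(t)-m_\infty^{NS,i}|,
\]
and the first term decays exponentially by \eqref{Eq500} (using $f_0([h_i,1])\ge f_0(h_N)>0$).

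Next, I would specialize \eqref{SystemMeanOpDiscrete} to $p=0$, which forces all interaction terms to carry the indicator $1_{\{j\ge i\}}$. The resulting system
\[
\frac{d}{dt}m^{NS,i}(t)=\alpha\sum_{j\ge i}f_0^S(h_j)m_0^{S,j}+(1-\alpha)\sum_{j\ge i}f_0^{NS}(h_j)m^{NS,j}(t)-f_0([h_i,1])\,m^{NS,i}(t)
\]
is upper triangular, so it can be solved from $i=N$ downwards. After separating the $j=i$ term in the second sum, the coefficient of $m^{NS,i}(t)$ rearranges to $-\alpha f_0^S([h_i,1])-(1-\alpha)f_0^{NS}([h_{i+1},1])$, using $f_0([h_i,1])=\alpha f_0^S([h_i,1])+(1-\alpha)f_0^{NS}([h_i,1])$ and $f_0^{NS}([h_i,1])=f_0^{NS}(h_i)+f_0^{NS}([h_{i+1},1])$.

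For the base case $i=N$, the ODE collapses to $(m^{NS,N})'=-\alpha f_0^S(h_N)(m^{NS,N}-m_0^{S,N})$. If $f_0^S(h_N)>0$ then $m^{NS,N}(t)\to m_0^{S,N}$ exponentially; if $f_0^S(h_N)=0$, then $m^{NS,N}(t)$ is constant and equals $m_0^N=m_0^{NS,N}$, which matches \eqref{minfinito9}. For the inductive step, assuming $m^{NS,j}(t)\to m_\infty^{NS,j}$ for $j>i$, the equation for $m^{NS,i}$ has the form $x'(t)=-\lambda_i x(t)+g_i(t)$ with $\lambda_i=\alpha f_0^S([h_i,1])+(1-\alpha)f_0^{NS}([h_{i+1},1])>0$ (the assumption $f_0(h_N)>0$ guarantees this because $f_0^{NS}([h_{i+1},1])\ge f_0^{NS}(h_N)$, and if $\alpha=0$ then $f_0^{NS}(h_N)=f_0(h_N)>0$), and with $g_i(t)\to g_i^\infty$ where
\[
g_i^\infty=\alpha\sum_{j\ge i}f_0^S(h_j)m_0^{S,j}+(1-\alpha)\sum_{j>i}f_0^{NS}(h_j)m_\infty^{NS,j}.
\]
Duhamel's formula gives $x(t)-g_i^\infty/\lambda_i=e^{-\lambda_i t}(x(0)-g_i^\infty/\lambda_i)+\int_0^t e^{-\lambda_i(t-s)}(g_i(s)-g_i^\infty)\,ds$; splitting the integral at a large $T$ where $|g_i(s)-g_i^\infty|<\varepsilon$ shows $x(t)\to g_i^\infty/\lambda_i$, which is precisely \eqref{minfinito10}.

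The main (very mild) obstacle is verifying that $\lambda_i>0$ so that Duhamel actually yields convergence; this is where the hypothesis $f_0(h_N)>0$ is used. The final claim when $f_0^S(h_i)=0$ for $i<N$ follows by plugging $f_0^S(h_j)=0$ for $j<N$ into \eqref{minfinito10} and checking inductively from $i=N-1$ down to $i=1$ that the numerator and denominator collapse, giving $m_\infty^{NS,i}=m_\infty^{NS,N}$ in both subcases of \eqref{minfinito9} (treating $\alpha=0$ separately when $f_0^S(h_N)=0$).
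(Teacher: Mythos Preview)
Your proposal is correct and follows essentially the same route as the paper: reduce to the means via \eqref{Eq500}, specialize \eqref{SystemMeanOpDiscrete} to $p=0$ to obtain an upper-triangular linear system, solve from $i=N$ downward, and finish the ``no stubborn below level $N$'' claim by induction. Your write-up is in fact more careful than the paper's in two places---you justify the convergence of each scalar ODE via Duhamel (the paper simply asserts the limit $-B_i/A_{ii}$) and you explicitly check $\lambda_i>0$ from $f_0(h_N)>0$---while the paper's handling of the final claim goes through the slightly different bookkeeping $m_\infty^j=\alpha_j m_0^{S,j}+(1-\alpha_j)m_\infty^{NS,j}$ rather than your direct substitution.
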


\begin{proof}
Notice that
$$\frac{d}{dt} m^{NS,N}(t) = A_{NN}m^{NS,N}(t) + B_N $$
with
$$ A_{NN}=(1-\alpha)f_0^{NS}(h_N)-f_0(h_N) = -\alpha f_0^S(h_N),
\quad
B_N =  \alpha m_0^{S,N} f_0^S(h_N) .$$

Let us first examine the case $f_0^S(h_N)=0$ i.e. there are no stubborn agents with hierarchy $h_N$.
Then $A_{NN}=B_N=0$ so that $\frac{d}{dt} m_t^{NS,N}=0$ i.e.
$m^N(t)=m^{NS,N}(t)=m_0^N$. It follows that $f^{NS,N}(t)\to \delta_{m_\infty^{NS,N}}$ with $m_\infty^{NS,N}:=m_0^N$.
If on the other hand $f_0^S(h_N)>0$, so that $\alpha>0$, then
$$ m^{NS,N}(t) \to -B_N/A_{NN}= m_0^{S,N}=:m_\infty^{NS,N}. $$
We thus obtain \eqref{minfinito9}.

We can now determine iteratively the limit of $m^{NS,i}(t)$ for $i=N-1,N-2,...,1$.
Indeed  from
$$ \frac{d}{dt} m^{NS,i}(t) = A_{ii}m^{NS,i}(t) + \sum_{j>i} A_{ij} m^{NS,j}(t) + B_i $$
we see that
\begin{equation*}\label{TheoreaticalLimitp0}
 \lim_{t\to +\infty}m^{NS,i}(t) = m_\infty^{NS,i}:= \frac{\sum_{j>i} A_{ij} m_\infty^{NS,j} + B_i}{-A_{ii}} .
\end{equation*}
Since
\begin{eqnarray*}
 -A_{ii} & = & f_{0}\left([h_i,1]\right)-(1-\alpha)f_0^{NS}(h_i) \\
& = & \alpha f_0^S\left([h_i,1]\right) + (1-\alpha)f_{0}^{NS}\left([h_{i+1},1]\right)
\end{eqnarray*}
we deduce \eqref{minfinito10}.

Denote $\alpha_j$  the proportion of stubborn individuals within the $h_j$-subgroup.
 Observe that $\alpha_j f_0(h_j)=\alpha f_0^S(h_j)$ is the proportion of stubborn individuals with hierarchy $h_j$. 
Likewise $(1-\alpha_j) f_0(h_j)=(1-\alpha) f_0^{NS}(h_j)$ is the proportion of non-stubborn individuals with hierarchy $h_j$.
Then the limit mean opinion in the $h_j$-subgroup is $m_\infty^j = \alpha_j m_0^{S,j} + (1-\alpha_j)m_\infty^{NS,j}$.
So, \eqref{minfinito10} can also be written as
$$ m_\infty^{NS,i}
= \frac{\sum_{j\ge i+1} f_0(h_j)m_\infty^j +  \alpha_i f_0(h_i)m_0^{S,i} }
       {\alpha_i f_0(h_i)+f_0([h_{i+1},1])}
\qquad i=1,..,N-1.
$$

In particular if there is no stubborn agent in the population $h_1,...,h_{N-1}$
i.e. $\alpha_1=...=\alpha_{N-1}=0$ then
$$ m_\infty^{NS,i}
= \frac{\sum_{j\ge i+1} f_0(h_j)m_\infty^j }{f_0([h_{i+1},1])}
\qquad i=1,..,N-1.
$$
We can then easily prove by induction that $m_\infty^i=m^N_\infty$ for $i=1,...,N-1$.
Thus in that case the limit opinion $m_\infty^N$ of the agents with highest hierarchy $h_N$ spreads to all the non-stubborn agents of lower hierarchy.

\end{proof}

The previous Theorem shows that when $p=0$, the information flows from top-to-bottom being only modified by  stubborn individuals. 
When $p\in (0,1)$, this vertical transmission is disrupted. We examine this case in the next subsection. 

\subsubsection{Analysis of the case $p\in (0,1)$.}

We begin our analysis of the case $p\in (0,1)$ showing that when there is a positive fraction $\alpha$ of stubborn individuals in the population, 
the eigenvalues of $A$ have negative real parts. 

\begin{prop}\label{Gershgorin}
The eigenvalues of $A$ belong to the set  $\{z\in\mathbb{C}:\,Re\,z\le -p\alpha - (1-p)\alpha f_0^S(h_N)\}$.
\end{prop}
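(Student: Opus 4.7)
The plan is to apply Gershgorin's circle theorem. Since $A$ is a real matrix, each eigenvalue $\lambda$ lies in some disc $\{z\in\C: |z-A_{ii}|\le R_i\}$ where $R_i=\sum_{j\neq i}|A_{ij}|$, and in particular $\mathrm{Re}\,\lambda \le A_{ii}+R_i$. Hence it suffices to show that, for every $i=1,\dots,N$,
\begin{equation*}
A_{ii} + R_i \;\le\; -p\alpha - (1-p)\alpha f_0^S(h_N).
\end{equation*}

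First I would observe that all off-diagonal entries $A_{ij}$ are nonnegative, so $R_i$ is just the unsigned sum. Splitting the sum according to whether $j<i$ (where $\mathbf{1}_{\{j\ge i\}}=0$) or $j>i$ (where the bracket collapses to $1$), I get
\begin{equation*}
R_i = (1-\alpha)\Big[\, p\sum_{j<i} f_0^{NS}(h_j) + \sum_{j>i} f_0^{NS}(h_j) \,\Big].
\end{equation*}
Introduce the shorthand $S_i^-:=\sum_{j<i} f_0^{NS}(h_j)$, so $\sum_{j>i}f_0^{NS}(h_j)=1-S_i^- -f_0^{NS}(h_i)$, and therefore $R_i=(1-\alpha)\bigl[1-(1-p)S_i^- -f_0^{NS}(h_i)\bigr]$.

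Next I would add this to the diagonal entry
\begin{equation*}
A_{ii}=(1-\alpha)f_0^{NS}(h_i)-p-(1-p)f_0([h_i,1]).
\end{equation*}
The terms $(1-\alpha)f_0^{NS}(h_i)$ cancel. Writing $f_0([h_i,1])=\alpha f_0^S([h_i,1])+(1-\alpha)(1-S_i^-)$ and expanding, the two contributions proportional to $S_i^-$ cancel as well. After collecting terms one arrives at the clean identity
\begin{equation*}
A_{ii}+R_i = -p\alpha - (1-p)\alpha\, f_0^S([h_i,1]).
\end{equation*}
I expect the bookkeeping in this cancellation to be the only delicate point; nothing deeper is going on.

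To conclude, since $h_N\in[h_i,1]$ for every $i$, we have $f_0^S([h_i,1])\ge f_0^S(h_N)$, so
\begin{equation*}
A_{ii}+R_i \;\le\; -p\alpha - (1-p)\alpha\, f_0^S(h_N),
\end{equation*}
uniformly in $i$. By Gershgorin, every eigenvalue $\lambda$ of $A$ satisfies $\mathrm{Re}\,\lambda \le -p\alpha-(1-p)\alpha f_0^S(h_N)$, which is the claim.
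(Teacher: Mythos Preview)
Your proof is correct and follows essentially the same route as the paper: apply Gershgorin's disc theorem, note the off-diagonal entries are nonnegative so that $R_i=\sum_{j\neq i}A_{ij}$, and then verify by direct computation that $A_{ii}+R_i=-p\alpha-(1-p)\alpha f_0^S([h_i,1])\le -p\alpha-(1-p)\alpha f_0^S(h_N)$. The only cosmetic difference is notation (your $S_i^-$ versus the paper's $f_0^{NS}([0,h_{i-1}])$), and the paper additionally remarks that $A_{ii}\le 0$, which is not actually needed for the bound.
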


\begin{proof}
According to Gershgorin's Discs Theorem, the eigenvalues of $A$ lies in the union of the discs
$D(A_{ii},\sum_{j\neq i}|A_{ij})$, $i=1,..,N$.
Let us verify that any such disc lies in $\{Re\,z\le -p\alpha - (1-p)\alpha f_0^S(h_N)\}$.
Notice that $A_{ij}\ge 0$, $j\neq i$. Concerning $A_{ii}$, notice that it is non-increasing in $p$ (because $\frac{d}{dp}A_{ii}=f_0([h_i,1])-1\le 0$) and if $p=0$ we have that
$A_{ii}=(1-\alpha)f_0^{NS}(h_i)-f_{0}\left([h_i,1]\right)\le 0$. Thus $A_{ii}\le 0$.
Writing
$$f_{0}\left([h,1]\right) = \alpha f_{0}^S\left([h,1]\right)+(1-\alpha)f_{0}^{NS}\left([h,1]\right)$$
and
$$ \sum_{j\neq i} A_{ij}=\sum_{j<i}A_{ij}+\sum_{j>i}A_{ij}
= (1-\alpha)p f_0^{NS}([0,h_{i-1}])+(1-\alpha)f_0^{NS}([h_{i+1},1]),$$
we have
\begin{align*}
 &A_{ii} + \sum_{i\neq j} |A_{ij}|= A_{ii} + \sum_{i\neq j} A_{ij} \\
 &=(1-\alpha)f_0^{NS}(h_i)-(p+(1-p)f_{0}\left([h_i,1]\right))\\
 &\quad +(1-\alpha)p f_0^{NS}([0,h_{i-1}])+(1-\alpha)f_0^{NS}([h_{i+1},1])\\
 &= (1-\alpha)f_0^{NS}(h_i)-p -(1-p)\alpha f_{0}^S\left([h_i,1]\right)
-(1-p)(1-\alpha) f_{0}^{NS}\left([h_i,1]\right) \\
&\quad  +(1-\alpha)p f_0^{NS}([0,h_{i-1}])+(1-\alpha)f_0^{NS}([h_{i+1},1]).
\end{align*}
Since $f_0^{NS}(h_i)+f_0^{NS}([h_{i+1},1])=f_0^{NS}([h_i,1])$.\\
Thus
\begin{align*} &A_{ii} + \sum_{i\neq j} |A_{ij}|\\
&=p(1-\alpha)f_0^{NS}([h_{i},1])-p-(1-p)\alpha f_{0}^{S}[h_i,1]+(1-\alpha)p f_0^{NS}([0,h_{i-1}])\\
&= -p\alpha - (1-p)\alpha f_{0}^S\left([h_i,1]\right).\end{align*}
Thus the eigenvalue of $A$ belongs to $\{Re\,z\le -p\alpha - (1-p)\alpha f_0^S(h_N)\}$.
\end{proof}

As a consequence if $\alpha>0$  the eigenvalues of $A$ are  negative.
Then $\lim_{t\to +\infty}X(t) = X_\infty$ with $AX_\infty+B=0$ i.e. $X_\infty=-A^{-1}B$.
We thus obtain the following result:

\begin{thm}\label{ThmLimit20} Suppose that $p>0$ and $\alpha>0$.
Then
\begin{equation}\label{Limit20}
 \lim_{t\to +\infty}f^{NS,i}(t) = \delta_{m_\infty^{NS,i}} \qquad i=1,..,N,
\end{equation}
where $(m_\infty^{NS,1},...,m_\infty^{NS,N})^T = -A^{-1}B$.
\end{thm}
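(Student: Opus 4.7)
The plan is to combine three ingredients already established in the paper: the exponential contraction of the support from Proposition \ref{Prop_Mean_Mixed}, the spectral bound from Proposition \ref{Gershgorin}, and the triangle inequality for $W_1$.

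First I would use Proposition \ref{Gershgorin} to conclude that, under the hypotheses $p>0$ and $\alpha>0$, every eigenvalue of $A$ has real part bounded above by $-p\alpha - (1-p)\alpha f_0^S(h_N) < 0$. In particular $A$ is invertible, so the linear inhomogeneous ODE $X'(t)=AX(t)+B$ associated with the system \eqref{SystemMeanOpDiscrete} has the unique equilibrium $X_\infty=-A^{-1}B$. Standard linear ODE theory (writing $X(t)-X_\infty = e^{tA}(X(0)-X_\infty)$ and using that $\|e^{tA}\|$ decays exponentially whenever the spectrum of $A$ lies in the open left half plane) then yields $\lim_{t\to+\infty}X(t)=X_\infty$, i.e.\ $m^{NS,i}(t)\to m_\infty^{NS,i}$ for every $i=1,\dots,N$.

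Next I would upgrade this convergence of means to weak convergence of the measures $f^{NS,i}(t)$ to $\delta_{m_\infty^{NS,i}}$. By \eqref{Eq500} we already know that
\[
W_1\bigl(f^{NS,i}(t),\delta_{m^{NS,i}(t)}\bigr)\le |\mathrm{conv}(\mathrm{supp}\,f_0^{NS,i})|\,e^{-(p+(1-p)f_0([h_i,1]))t}\longrightarrow 0.
\]
On the other hand $W_1(\delta_{m^{NS,i}(t)},\delta_{m_\infty^{NS,i}})=|m^{NS,i}(t)-m_\infty^{NS,i}|\to 0$ by the previous step. The triangle inequality for $W_1$ then gives $W_1(f^{NS,i}(t),\delta_{m_\infty^{NS,i}})\to 0$, which is exactly the weak convergence \eqref{Limit20}.

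The argument is almost entirely bookkeeping once Proposition \ref{Gershgorin} is available, so I do not expect a genuine obstacle. The only point worth being careful about is that Gershgorin's theorem as invoked in Proposition \ref{Gershgorin} gives a bound on real parts of eigenvalues rather than on the spectral abscissa of a Jordan form, but this is enough for $\|e^{tA}\|\le C(1+t)^{N-1}e^{-\lambda t}$ with any $\lambda<p\alpha+(1-p)\alpha f_0^S(h_N)$, which suffices for convergence (and in fact yields exponential convergence of $X(t)$ to $X_\infty$, hence of $f^{NS,i}(t)$ to $\delta_{m_\infty^{NS,i}}$ in $W_1$).
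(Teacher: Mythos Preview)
Your proposal is correct and follows essentially the same route as the paper: use Proposition \ref{Gershgorin} to get strictly negative real parts for the eigenvalues of $A$, deduce $X(t)\to -A^{-1}B$ from standard linear ODE theory, and combine with the $W_1$-contraction \eqref{Eq500} via the triangle inequality. Your write-up is in fact slightly more careful than the paper's, which leaves the Jordan-block/exponential-decay step and the triangle-inequality step implicit.
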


We show in the next section numerical simulations of the agent model which agree completely with the conclusion of Theorems  
\ref{Thmp0}  and \ref{ThmLimit20}. 
We will also observe that when $\alpha=0$, individuals also share  asymptotically the same opinion in each hierarchy level. 
Unfortunately we were not able to deal theoretically with this case $\alpha=0$. 
It usually requires an explicit conserved quantity (see e.g. \cite{pps}) we could not find.

\section{Computational simulations}
 
We present in this section some simulations of  the agent model and compare the numerical limit opinions in each hierarchy level with the theoretical predictions \eqref{minfinito9}-\eqref{minfinito10} and \eqref{Limit20} found above.

The simulations of the agent model were done considering a population of $N=10000$ individuals divided in three hierarchy levels
$h_1=0$, $h_2=0.4$, and $h_3=1$ in respective proportion of $f_0(h_1)=0.2$, $f_0(h_2)=0.7$, and $f_0(h_3)=0.1$.
Initially opinion are distributed uniformly at random in each hierarchy level as $Unif(-0.9,-0.7)$ in $h_1$,
$Unif(-0.5,0.5)$ in $h_2$, $Unif(0.8,1)$ in $h_3$.
The strength $\gamma$ of the attraction in the interaction rule \eqref{InterRules} was taken as $\gamma=0.01$.

We run the simulation with or without the presence of stubborn individuals considering in each case the following values of $p$:
$p=0$, $p=0.25$, $p=0.5$ and $p=1$.

We show in Figure 1 below the evolution in each hierarchy level of the mean opinion in the non-stubborn population together with the evolution of its variance in inset. The theoretical limit opinion is indicated by black dashed line.
Figure 2 below shows the evolution of the distribution of opinion in the whole population.
In both Figures each row corresponds to a value of $p$ (from Top to Bottom, $p=0,\,0.25,\,0.5,\,1$).
The left column corresponds to simulations without stubborn agents, and the right column to simulations with stubborn agents.
In that case we  suppose that there is a proportion of stubborn individual in the
hierarchy levels $h_1$, $h_2$, $h_3$ equal to $0.5$, $0.3$, $0.8$ respectively.

We can observe that the opinion distribution in each hierarchy level converges to a Dirac mass.
In the cases $p=0$ and $p>0$, $\alpha>0$, the limit opinion values agree perfectly with the theoretical predictions
\eqref{minfinito9}-\eqref{minfinito10} and \eqref{Limit20}  indicated by black dashed line.
Comparing the top row for $p=0$ with the 2nd row $p=0.25$, 
we can appreciate  how the noise $p$ disrupts the top-to-bottom transmission of information. 
The variance shown in the inset converge to 0 exponentially fast in agreement with Prop. \ref{Prop_Mean_Mixed}.
We can also see that the lower ranked population reach consensus earlier than the higher ranked one.
This also agrees with the estimate of the velocity of convergence given in Prop. \ref{Prop_Mean_Mixed}.
Eventually this convergence seems  to hold also in the case $p>0$, $\alpha=0$ which we could not treat theoretically.

\begin{figure}[H]\label{Fig_p0}
\medskip
\begin{minipage}{.49\textwidth}
  \centering
  \includegraphics[width=0.95\linewidth]{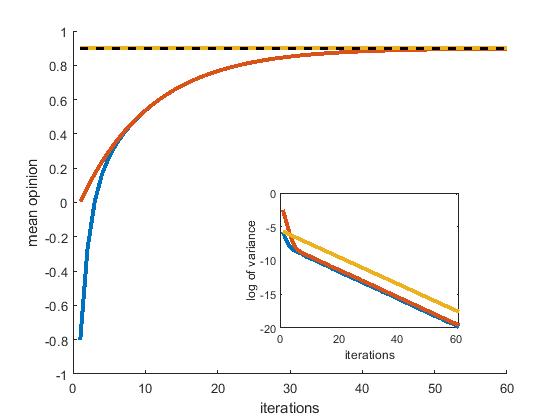}
\end{minipage}
\begin{minipage}{.49\textwidth}
  \centering
  \includegraphics[width=0.95\linewidth]{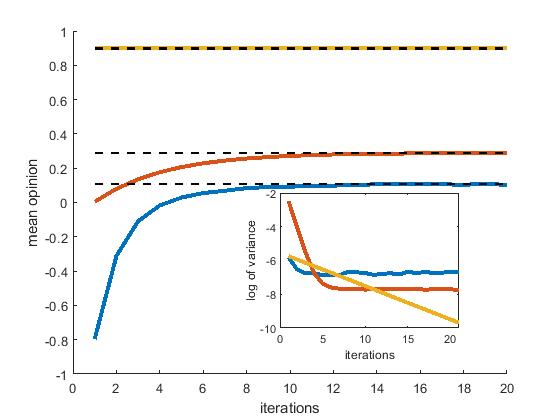}
\end{minipage}
\begin{minipage}{.49\textwidth}
  \centering
  \includegraphics[width=0.95\linewidth]{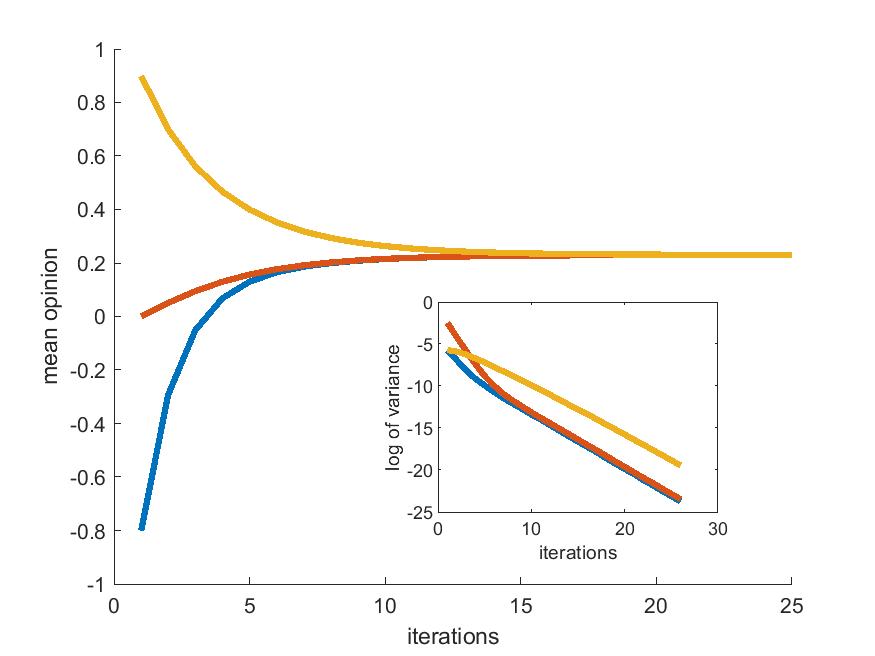}
\end{minipage}
\begin{minipage}{.49\textwidth}
  \centering
  \includegraphics[width=0.95\linewidth]{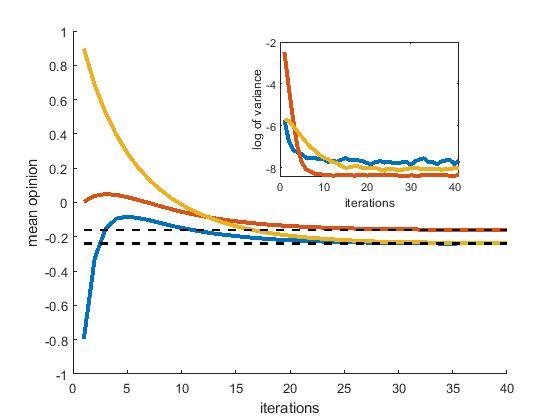}
\end{minipage}
\begin{minipage}{.49\textwidth}
  \centering
  \includegraphics[width=0.95\linewidth]{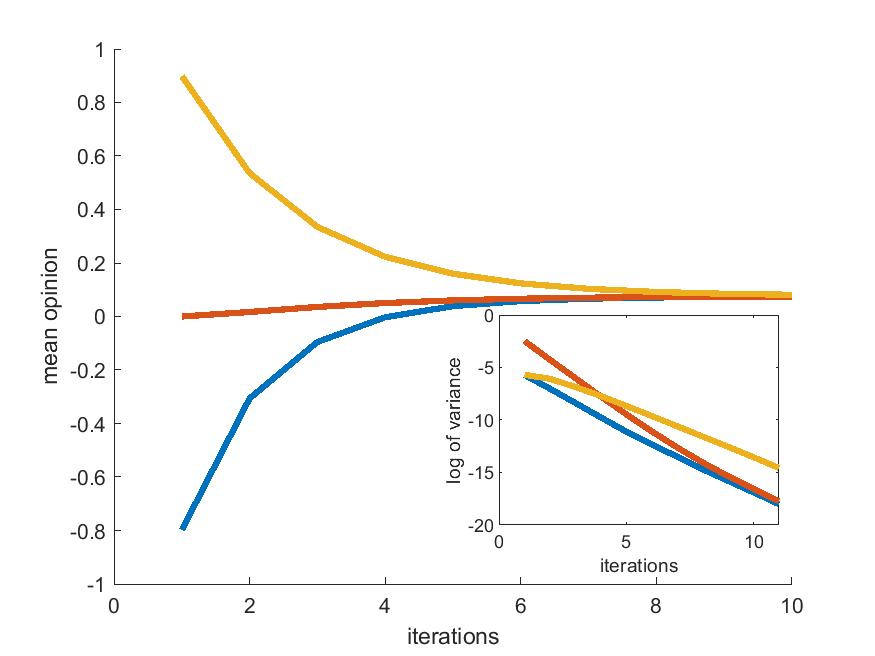}
\end{minipage}
\begin{minipage}{.49\textwidth}
  \centering
  \includegraphics[width=0.95\linewidth]{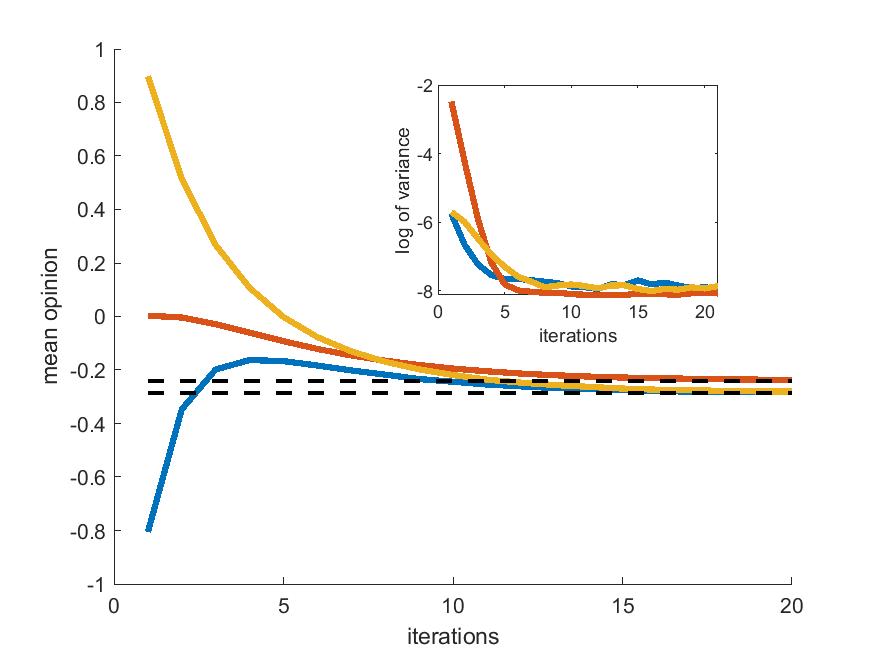}
\end{minipage}
\begin{minipage}{.49\textwidth}
  \centering
  \includegraphics[width=0.95\linewidth]{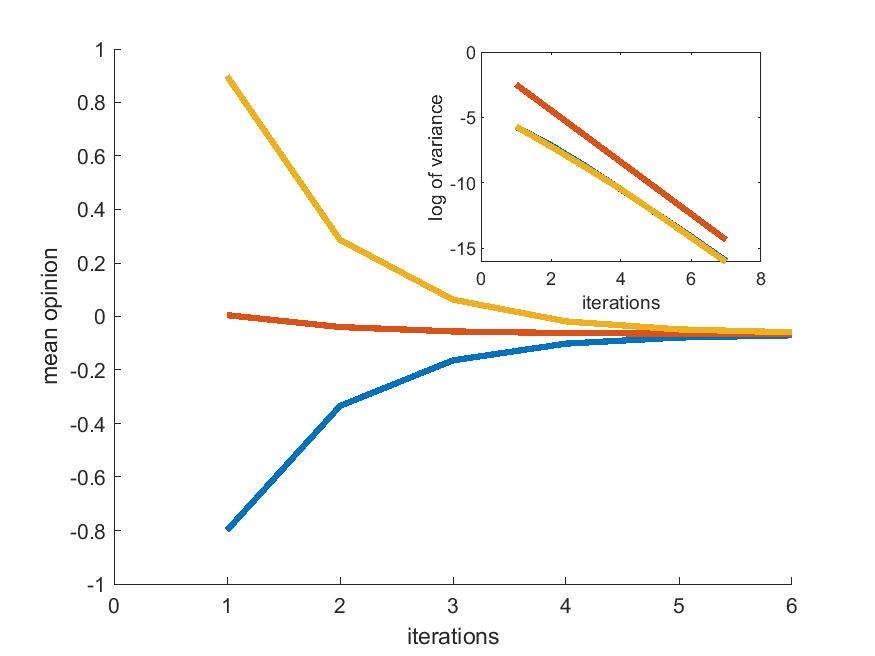}
\end{minipage}
\begin{minipage}{.49\textwidth}
  \centering
  \includegraphics[width=0.95\linewidth]{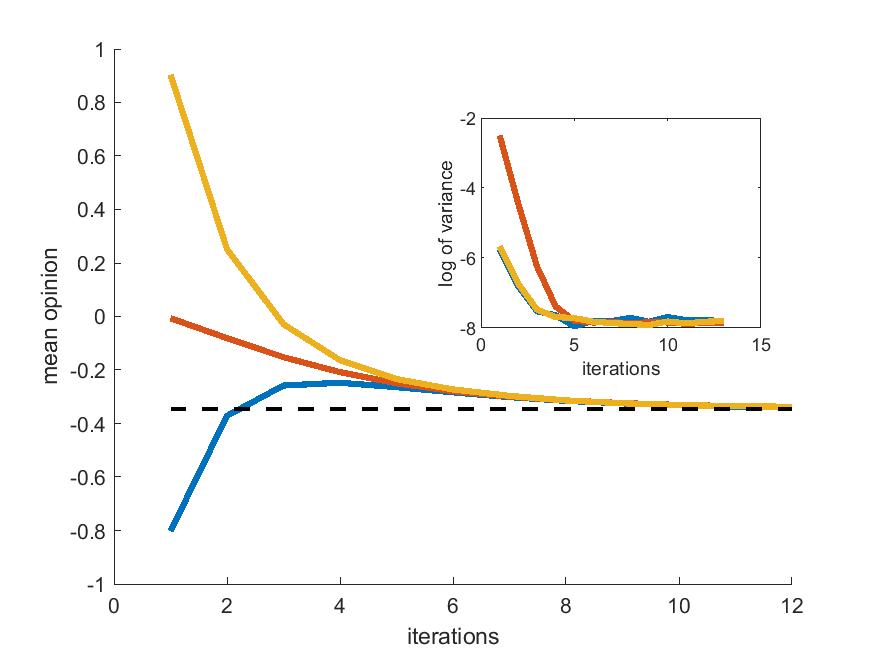}
\end{minipage}
\caption{Time evolution of the mean opinion and its variance (inset) in the non-stubborn population for each hierarchy level with/without stubborn and for different values of $p$. From Top to Bottom: $p=0$, $p=0.25$, $p=0.75$, $p=1$. Left: without stubborn agent, Right: with stubborn agent.
The theoretical limit opinions (available when $p=0$ and $p>0$ with stubborn agents) are indicated by black dashed horizontal lines. }
\end{figure}

\begin{figure}[H]\label{Fig_p0}
\medskip
\begin{minipage}{.49\textwidth}
  \centering
  \includegraphics[width=0.95\linewidth]{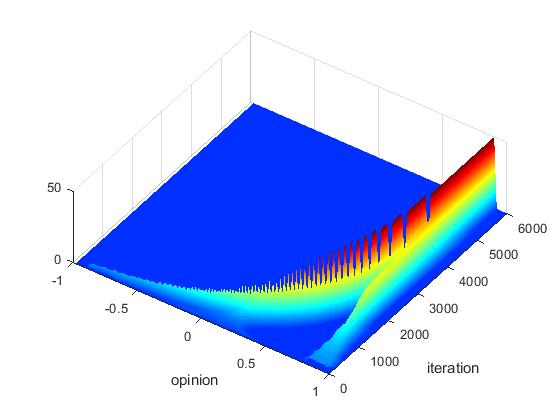}
\end{minipage}
\begin{minipage}{.49\textwidth}
  \centering
  \includegraphics[width=0.95\linewidth]{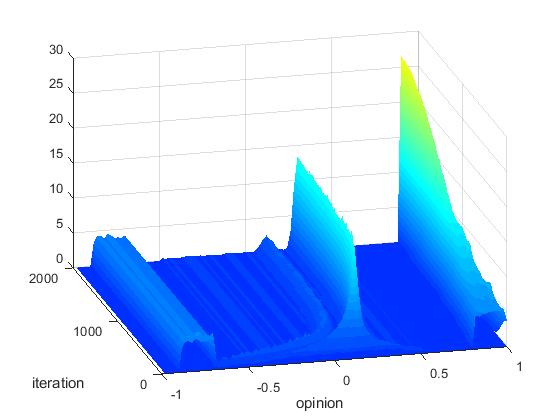}
\end{minipage}
\begin{minipage}{.49\textwidth}
  \centering
  \includegraphics[width=0.95\linewidth]{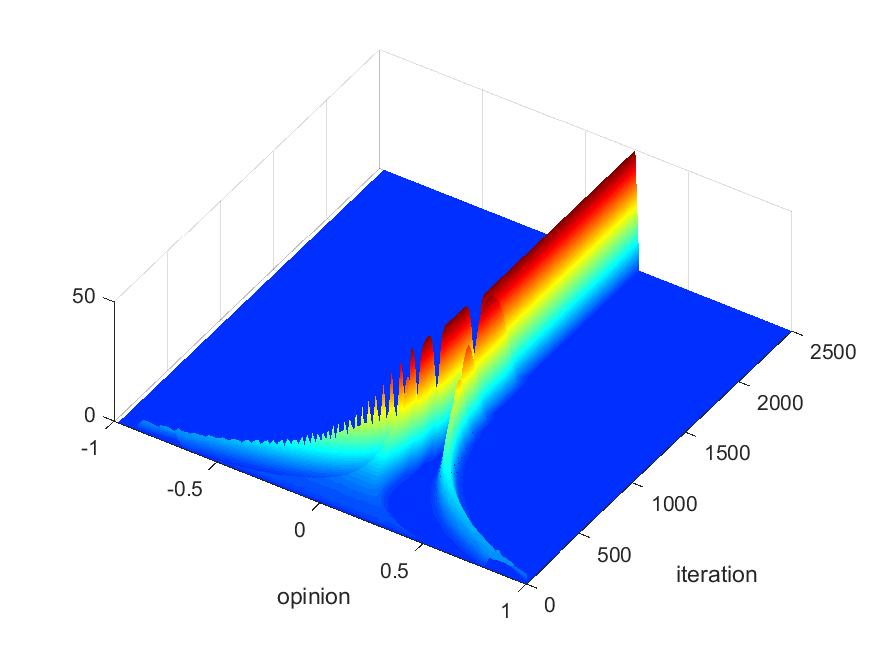}
\end{minipage}
\begin{minipage}{.49\textwidth}
  \centering
  \includegraphics[width=0.95\linewidth]{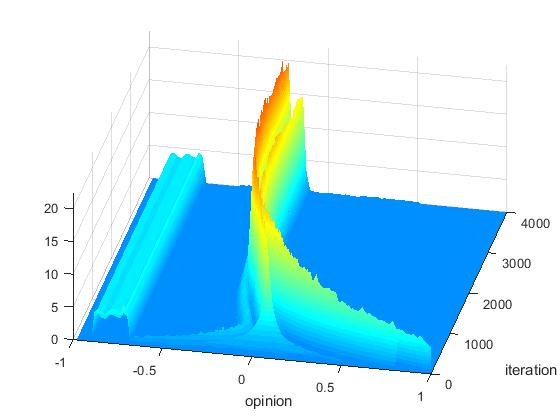}
\end{minipage}
\begin{minipage}{.49\textwidth}
  \centering
  \includegraphics[width=0.95\linewidth]{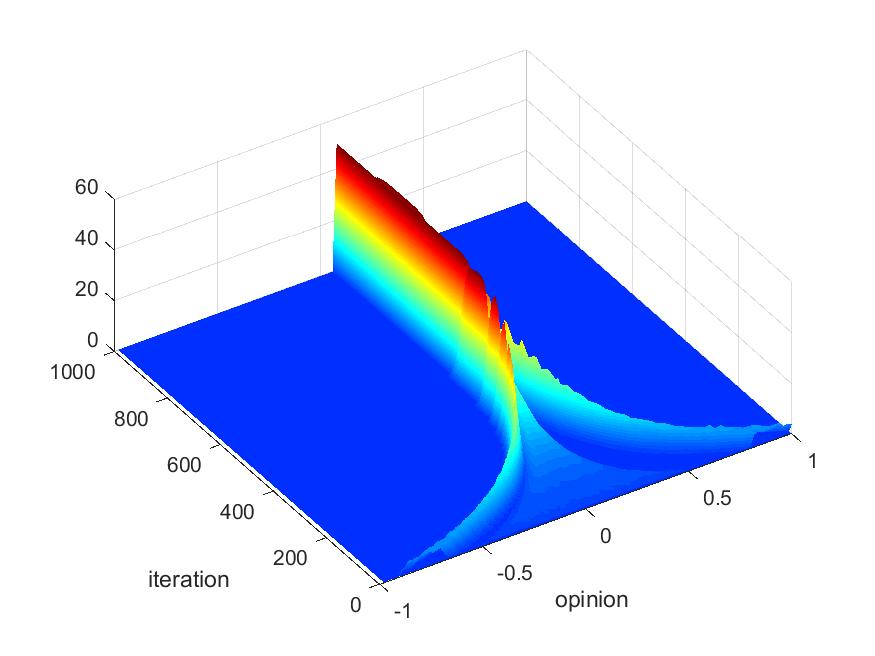}
\end{minipage}
\begin{minipage}{.49\textwidth}
  \centering
  \includegraphics[width=0.95\linewidth]{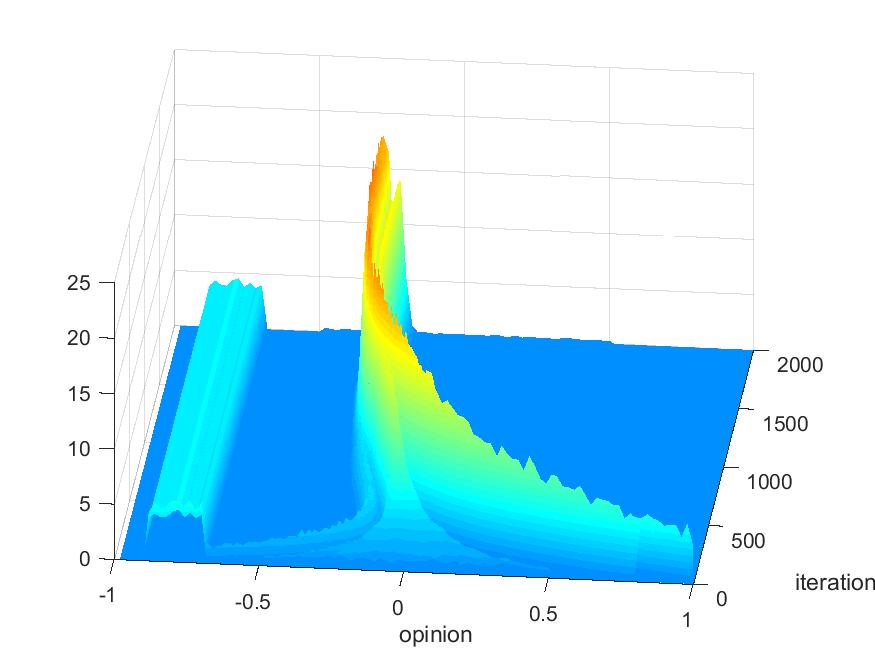}
\end{minipage}
\begin{minipage}{.49\textwidth}
  \centering
  \includegraphics[width=0.95\linewidth]{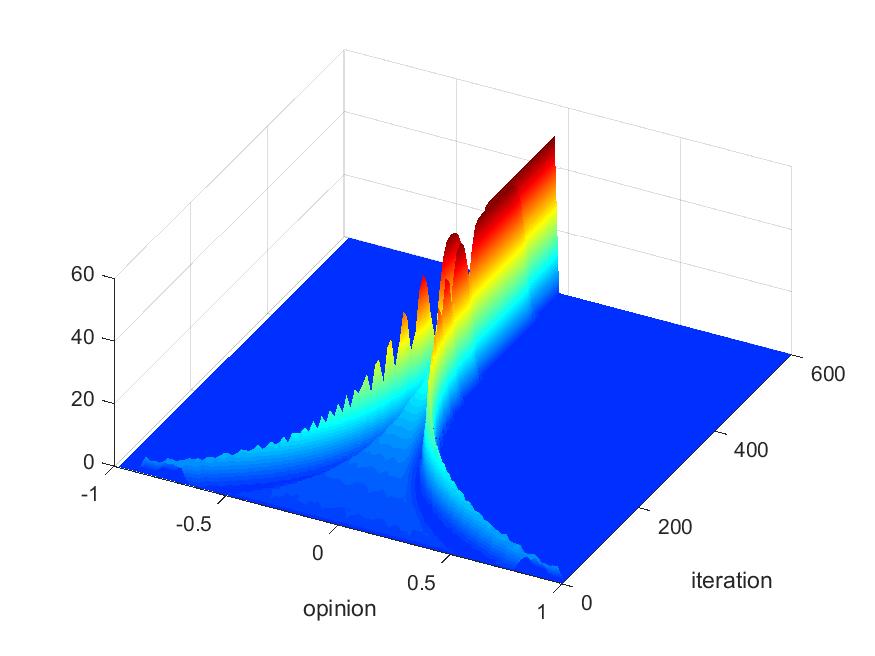}
\end{minipage}
\begin{minipage}{.49\textwidth}
  \centering
  \includegraphics[width=0.95\linewidth]{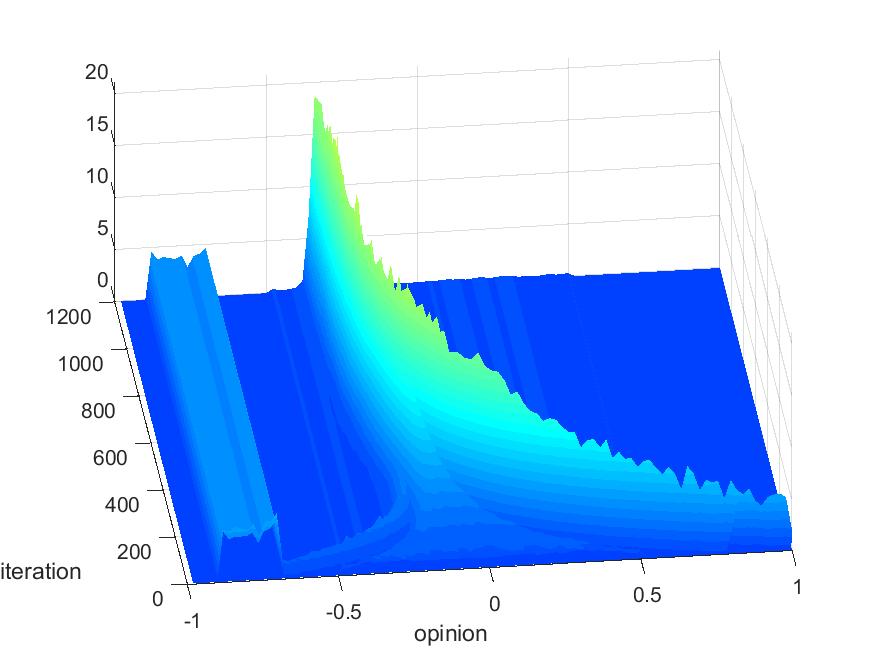}
\end{minipage}
\caption{Time evolution of the distribution of opinion in the whole population for the same model parameters as in Figure 1. 
From Top to Bottom: $p=0$, $p=0.25$, $p=0.75$, $p=1$. Left: without stubborn agent, Right: with stubborn agent.
}
\end{figure}

\end{document}